\newtheorem{Theorem}{Theorem}[part]
\newtheorem{Proposition}{Proposition}[part]
\newtheorem{Lemma}{Lemma}[part]
\def \R{\mathbb{R}}
\def \E{\mathbb{E}}
\def \F{\mathbb{F}}
\def \P{\mathbb{P}}
\def \Ac{{\cal A}}
\def \Cc{{\cal C}}
\def \Fc{{\cal F}}
\def \Hc{{\cal H}}
\def \Kc{{\cal K}}
\def \Zc{{\cal Z}}
\def \1{{\mathds 1}}
\newcommand{\essinf}{\mathop{\mathrm{ess\;inf}}}
\def \ni{\noindent}
\def \eps{\varepsilon}
\def \ep{\hbox{ }\hfill$\Box$}
\def\beqs{\begin{eqnarray*}}
\def\enqs{\end{eqnarray*}}
\def\beq{\begin{eqnarray}}
\def\enq{\end{eqnarray}}
 \title{Regulation of renewable resource exploitation}
 \author{Idris Kharroubi\footnote{Sorbonne Universit\'e, LPSM,  \texttt{idris.kharroubi  @upmc.fr}.} \and Thomas Lim\footnote{Laboratoire d'Analyse et Probabilit\'es,
Universit\'e d’Evry-Val d'Essonne and ENSIIE, \texttt{lim@ensiie.fr}.}\and Thibaut Mastrolia\footnote{CMAP, Ecole Polytechnique, IP Paris, \texttt{thibaut.mastrolia@polytechnique.edu}. This author acknowledges the financial supports of the Chaire Financial Risks hosted by the Louis Bachelier Institute and the ANR project PACMAN ANR-16-CE05-0027.}}  
             \date{\today}
\begin{document}

 \maketitle

 \begin{abstract}
 
  We investigate the impact of a regulation policy imposed on an agent exploiting a possibly renewable natural resource. We adopt a  principal-agent model in which the Principal looks for a contract, $i.e.$ taxes/compensations, leading the Agent to a certain level of exploitation. For a given contract, we first describe the Agent's optimal harvest using the BSDE theory. Under regularity and boundedness assumptions on the coefficients, we express almost optimal contracts as solutions to HJB equations. We then extend  the result to coefficients with less regularity and logistic dynamics for the natural resource. We end by numerical examples to illustrate the impact of the regulation in our model.   
 \end{abstract}

\vspace{1em}

\noindent{\bf Key words:} Contract Theory, BSDEs, HJB PDE, Logistic SDE.


\section{Introduction}

The exploitation of natural resources is fundamental for the survival and development of the growing human population. However, natural resources are limited since they are either non renewable (e.g. minerals, oil, gas and coal) so that the available quantity is limited, or renewable (e.g. food, water and forests) and in this case the natural resource is limited by its ability to renew itself. In particular, an excessive exploitation of such resources might lead to their extinctions and therefore affect the depending economies with, for instance, high increases of prices and higher uncertainty on the future. Thus, the natural resource manager faces a dilemma: either harvesting intensively the resource to increase her incomes, or taking into account the potential externalities induced by an overexploitation of the resource and impacting her future ability to harvest the resource. It has been nevertheless emphasized in \cite{clark1973profit} that in some cases it is optimal for natural resource manager to harvest until the extinction of the resource. This optimal harvesting strategy thus leads to costs for the global welfare related to the environment degradation.  
\\


Therefore, the management and the monitoring of the exploitation of natural resources are a balance between optimal harvest for the natural resource manager and ecological implications for public organizations. This second issue has attracted a lot of interest, especially from governance institutions. For example in its last annual report on sustainable development, the statistical office of the European Union \textit{Eurostat} dedicates a full section to the question of sustainable consumption and production (see \cite{ESR18}, Section 12). 

The management of natural resources have also attracted a lot of interest from the academic community. 
Many studies on natural resources exploitation tried to describe the possible effect of economic incentives  on the exploitation (see e.g. \cite{EBB93,gjertsen2010incentive,suich2013effectiveness,grafton2006incentive}). These references stress the need of an incentive policy to ensure the sustainability of the resource. However, even if the regulator have access to the abundance level of natural resource, the unobservability of the natural resource manager behavior induces moral hazards. Thus, the regulator's issue is to incentivize the resource manager to optimally reduce the cost of the resource degradation, together with ensuring a minimal incomes for the manager, under moral hazard. To the best of our knowledge, this question has been addressed only in the discrete-time framework (see for instance \cite{gaudet2015management}) without considering any randomness in the dynamics of the resource. The aim of this work is to investigate this problem in continuous time with randomness in the system.

To deal with this issue, we consider a principal-agent model under moral hazard. The first elements of contract theory with moral hazard appeared in the 60's with the articles \cite{arrow1964research,wilson1967structure} in which the mechanisms of controlled management were investigated. Then, it has been extended and named as \textit{agency problem} (see among others \cite{wilson1968theory,ross1973economic}) by considering discrete-time models. Concerning the continuous-time framework, the agency problem with moral hazard has been first studied in \cite{holmstrom1987aggregation} by modelling the uncertainty of risky incomes with a Brownian motion. 

The agency problem can be roughly described as follows. We associate a moral hazard problem with a Stackelberg game in which the leader (named the Principal) proposes at time $0$ a compensation to the follower (named the Agent) given at a maturity $T>0$ fixed by the contract, to manage the wealth of the leader. Moreover, the Principal has to propose a compensation high enough (called the reservation utility) to ensure a certain level of utility for the Agent. Although the Principal cannot directly observe the action of the Agent, the former can anticipate the best reaction effort of the latter with respect to a fixed compensation. Hence the agency problem remains to design an optimal compensation proposed by the Principal to the Agent given all the constraints mentioned above under moral hazard. 

The common approach to solve this problem consists in proceeding in two steps. The first step is to compute the optimal reaction of the Agent given a fixed compensation proposed by the Principal,  \textit{i.e.} solving the utility maximization problem of the Agent. In all the papers mentioned above, the shape of considered contracts is fundamental to solve the Agent problem by assuming that the compensation is composed by
\begin{itemize}
\item a constant part depending on the reservation utility of the Agent,
\item a part indexed by the (risky) incomes of the Principal,
\item the certain equivalent gain of utility appearing in the Agent maximization. 
\end{itemize}
Using the theory of Backward Stochastic Differential Equations (BSDE for short), \cite{cvitanic2018dynamic} proved that this class of \textit{smooth} contracts, having a relevant economic interpretation, is not restrictive to solve the agency problem. The second step consists in solving the Principal problem. Taking into account this optimal reaction of the Agent, the goal is to compute the optimal compensation. As emphasized in \cite{sannikov2008continuous} and then in \cite{cvitanic2016moral,cvitanic2018dynamic}, this problem remains to a (classical) stochastic control problem with the wealth of the Principal and the continuation utility value of the Agent as state variables.\\

In this paper, we identify the natural resource manager as the Agent. The Principal refers to a regulator, which can be a public institution that monitors the resource manager's activities.

The resource manager can either harvest or renew the natural resource. In the first case the production is sold at a given price on the market and in the second case the resource manager pays for each unit of renewed natural resource. To regulate the natural resource exploitation, the Principal imposes a tax/compensation to the Agent depending on the remaining level of resource at the terminal time horizon. We suppose here that the Agent is risk-averse  and we model his preference with an exponential utility function\footnote{See for instance \cite{arrow1971theory} for more details on this kind of utility function and the economical interpretations of it.}. For a given harvesting strategy, the Agent total gain is composed by the cumulated amounts paid/earned by renewing/harvesting the natural resource and the regulation compensation/tax. The Agent's aim is then to maximize the expected utility of his total gain over possible harvesting strategies.
  
On the other side, given the previous optimal harvest of the Agent, the regulator aims at fixing a tax/compensation policy that incentives the Agent to let a reasonable remaining level of natural resource. As a public institution, we assume that the regulator is risk-neutral.\\

The main features to model the dynamic of a renewable natural resource are its birth and death rates and the inter-species competition. Besides, due to random evolution of the population, we consider uncertainty in the available abundance. Following \cite{evans2015protected, alvarez2007optimal,meleard2015some} we choose to model the evolution of the natural resource by a stochastic logistic diffusion.\\

We then focus on the Principal-agent problem. 
We first characterize the Agent behavior for a fixed regulation policy represented by a random variable $\xi$. Following the BSDEs approaches to deal with exponential utility maximization, we get a unique optimal harvesting strategy as a function of the $Z$ component of the solution to a quadratic BSDE with terminal condition $\xi$ (see \cite{rouge2000pricing,hu2005utility}).     

We next turn to the regulator problem which consists in maximizing an expected terminal reward depending on the regulation tax $\xi$ and the level of remaining natural resource according to the Agent's optimal response.  By writing the explicit form of the resource manager's optimal strategy, we turn the regulator problem into a Markov stochastic control problem of a diffusion with controlled drift.  We then look for a regular solution to the related PDE to proceed by verification. However, in our case we face the following three issues.
\begin{itemize}
\item By considering the logistic dynamics for the resource abundance population, the HJB PDE related to the Principal problem involves a term of the form $x^2\partial_x v$ where $x$ stands for the resource population abundance and $v$ is the Principal’s value function. This term, induced by the inter-species competition in the classical logistic case, prevents us from using existence results of regular solutions to PDEs.
\item The shape of the optimal harvest of the manager leads to irregular coefficients for the related PDE, which also prevents from getting regular solutions.
\item Due to the exponential preferences of the Agent, the Principal's admissible strategies need to satisfy an exponential integrability condition. However, the linear preferences of the Principal leads to an optimal contract that is not necessarily exponential integrable. Therefore, the regulator problem might not have an optimal regulation policy. 
\end{itemize}
To deal with these issues, we first study a model for which the inter-species competition coefficient $\mu$ of the population is bounded. Hence, the term $x^2\partial_x v$ is replaced by $x\mu(x)\partial_x v$. We then construct a regular approximation of the Hamiltonian. By considering the related PDE, we derive a regular solution (see Proposition \ref{Approx-Hamiltonian}) together with an almost optimal control satisfying the admissibility condition (see Theorem \ref{thm:regulator}). We notice that our approach can be related to that of Fleming and Soner \cite{FS06}, which consists in an approximating the value function by a sequence of smooth value functions to derive a dynamic programming principle. We next turn to the logistic case $i.e.$ $\mu(x)=x$ for which we show that the almost optimal strategy obtained for a truncation of $\mu$ remains an almost optimal strategy for a large value of the  truncation parameter (see Theorem \ref{thm:logistic}).

We finally illustrate our results by numerical experiments. We compute the almost optimal strategies using approximations of solutions to HJB PDEs and show that the regulation has a significant effect on the level of remaining natural resource. 

\vspace{1mm}

The remainder of the paper is the following. In Section \ref{The natural resource} we describe the considered mathematical problem. We then solve in Section \ref{section:optimaleffortmanager} the manager's problem for a given regulation policy. In Section \ref{section:regulatorpb}, we first provide almost optimal strategies in the case where the coefficient $\mu$ is bounded and we extend our result to the logisitic dynamics. We end Section \ref{section:regulatorpb} by economical insights and numerical experiments.    
\vspace{10mm}

 \subsection*{Notations and spaces}
We give in this part all the notations used in this paper. Let $(\Omega, \Fc, \P)$ be a complete probability space. We assume that this space is equipped with a standard Brownian motion $W$  and we denote by $\F := (\Fc_t)_{t \geq 0}$ its right-continuous and complete natural filtration.

Let $p\geq 1$ and a time horizon $T>0$, we introduce the following spaces
\begin{itemize}
\item  $\mathcal P(\mathbb R)$ (resp. $\mathcal Pr(\mathbb R)$) will denote the $\sigma$-algebra of $\mathbb R$-progressively mesurable, $\mathbb F$-adapted (resp. $\mathbb F$-predictable) integrable processes.
\item $\mathcal S^p_T$ is the set of processes $X,\, \mathcal P(\mathbb R)$-mesurable and continuous satisfying 
$$
\mathbb E[\sup_{0\leq t\leq T} |X_t|^p] < +\infty \;.
$$
\item $\mathbb H^p_T$  is the set of processes $X, \, \mathcal Pr(\mathbb R)$-mesurable satisfying 
$$
\mathbb E\Big[\big(\int_0^T|X_t|^2dt\big)^{\frac p2}\Big] < +\infty \;.
$$
\item For an integer $q\geq 0$, a subset $D$ of $\R^q$ and for any $\nu\in(0,1)$, we denote by $C^{1+\nu}(D)$ the set of  continuously differentiable functions $f:~D\rightarrow\R$ such that
$$
|f|_{1}  =  \sup_{x\in D}\Big(|f(x)| +\sum_{1\leq i\leq q}|\partial_{x_i}f(x)|+ \sup_{x,y\in D}\sum_{1\leq i\leq q}\frac{|\partial_{x_i}f(x)-\partial_{x_i}f(y)|}{|x-y|^\nu}\Big)<\infty\;,
$$
and by $C^{2+\nu}(D)$ the set of twice continuously differentiable functions $f:~D\rightarrow\R$ such that
\beqs
|f|_{2+\nu} & = & \sup_{x\in D}\Big(|f(x)| +\sum_{1\leq i\leq q}|\partial_{x_i}f(x)|+\sum_{1\leq i,j\leq q}|\partial_{x_i,x_j}f(x)|\Big)\\
 & & +\sup_{x,y\in D}\sum_{1\leq i,j\leq q}\frac{|\partial_{x_i,x_j}f(x)-\partial_{x_i,x_j}f(y)|}{|x-y|^\nu}~<~\infty\;.
\enqs 
\end{itemize}

\section{The model}\label{The natural resource}
\subsection{The natural resource}

\noindent We fix a deterministic time horizon $T>0$ and we suppose that the natural resource abundance $X_t^{\mu}$ at time $t\geq 0$ is given  by
\begin{equation}\label{sdeprimal}
X_t^{\mu} = X_0+\int_0^tX_s^{\mu} (\lambda-\mu(X_s^{\mu}))ds+\int_0^t  \sigma X_s^{\mu}dW_s \;,\quad t\in[0,T]\;,
\end{equation}
where $X_0$, $\lambda$ and $\sigma$ are positive constants. The quantities $X_0$  and $\lambda$ correspond to the initial natural resource abundance and the growth rate respectively. The map
$\mu$ represents the competition inside the species considered or more generally an auto-degradation parameter for a natural resource. We assume that the map $\mu$ satisfies the following assumption\\

\noindent $\textbf{(H0)}$ $\mu$ is a map from $\R_+$ to $\R_+$ such that \eqref{sdeprimal} admits a unique strong solution in $\mathcal S^2_T$.\\ 

\noindent Note that Assumption $\textbf{(H0)}$ holds for instance if the map $x\longmapsto x\mu(x)$ is Lipschitz continuous. Another important example
is the so-called logistic equation where $\mu(x)=x$ on $\mathbb R_+$, see for example in \cite{evans2015protected}. In this last case, SDE \eqref{sdeprimal} admits an explicit unique solution that will be denoted in the sequel by $X$ and given by
$$
 X_t = \frac{X_0 e^{(\lambda-\frac{\sigma^2}2)t+\sigma W_t}}{1+X_0\int_0^t e^{(\lambda -\frac{\sigma^2}2)s+\sigma W_s} ds}\;,\quad t\in [0,T]\;.
$$
The ecological interpretation of this model is the following. At time $t$, if the coefficient $\mu(X_t^{\mu})$ is larger than $\lambda$ then the drift of the diffusion is negative. Therefore the abundance of the natural resource $X_t^{\mu}$ decreases in mean. Conversely, if $\mu(X_t^{\mu})$ is smaller than $\lambda$ then the drift of the diffusion is positive. Hence, the abundance $X_t^{\mu}$ increases in mean. For more details see for instance \cite[Proposition 3.4]{meleard2015some}. \\

\noindent More general models can be used in practice and one of the main challenges, see \cite{meleard2015some}, is to rely branching processes with  birth and death intensities to the solutions of continuous SDEs.
\subsection{The Agent's problem }\label{section:modelagent}
We consider an agent who tries to make profit from the natural resource. We suppose that this agent owns facilities to either harvest or renew this resource. We assume that his action happends continuously in time and we denote by $\alpha_t$ his intervention rate at time $t$, \textit{i.e.} the abundance $X_t^\mu$ will decrease of an amount $\alpha_tX_t^\mu$ per unit of time. This means that if the intervention rate $\alpha_t$ is positive (resp. negative), the Agent harvests (resp. renews) the natural resource.  We denote by $\mathcal A$ the set of $\F$-adapted processes defined on $[0,T]$ and valued in $[-\underline M, \overline M]$ where $\underline M$ and $\overline M$ are two nonnegative constants. If the Agent is prohibited to renew the resource then $\underline M = 0$.
This set $\Ac$ is
 called the set of admissible actions.\\

 To take into account the control $\alpha$ of the Agent on the natural resource abundance, we introduce the probability measure $\P^\alpha$ defined by its density $H^\alpha$ w.r.t.  $\P$ given by
$$
{d\P^\alpha \over d\P}\Big|_{\Fc_T}  :=  H^\alpha_T \;,
$$
where the process $H^\alpha$ is defined by
$$
H^\alpha_t := \exp\Big( -\int_0^t{\alpha_s\over \sigma } dW_s-{1\over 2}\int_0^s \Big|{\alpha_s\over \sigma}\Big|^2 ds \Big) \;,\quad t\in[0,T]\;.
$$
In the sequel, we denote by $\E^{\alpha}$ and  $\E^{\alpha}_t$ the expectation and conditional expectation given $\Fc_t$ respectively, for any $t\in[0,T]$, under the probability measure $\P^\alpha$.

For $\alpha\in\Ac$, we get from Girsanov Theorem (see e.g. Theorem 5.1 in \cite{KS98}) that  
the process $W^\alpha$ defined by
$$
W_t^\alpha  :=  W_t+ \int_0^t\frac{\alpha_s}{\sigma}ds\;,\quad t\in[0,T]\;, 
$$
is a Brownian motion under the probability $\P^\alpha$. Thus, for a given admissible effort $\alpha\in \Ac$,  the  dynamics of $X$ can be rewritten under the probability $\P^\alpha$ as
\begin{equation*}
X_t^{\mu} = x+\int_0^t\big(X_s^{\mu} (\lambda-\mu(X_s^{\mu})) - \alpha_sX_s^{\mu}\big)ds+\int_0^t\sigma X_s^{\mu}dW^\alpha_s \;,\quad t\in[0,T]\;.
\end{equation*}
This new dynamics reflects the evolution of the population with a rate $\alpha_t$ per unit of time. Hence, $ \alpha_tX_t^{\mu}$ has to be seen as the speed of the exploitation of the natural resource at time $t$.

\vspace{2mm}

We then are given a price function $p:~\R_+\rightarrow\R_+$ and we suppose that the price per unit of the natural resource on the market is given by $p(X_t^{\mu})$ at time $t\geq 0$. We make the following assumption on the price function $p$.

\vspace{2mm}

\ni \textbf{(H$_p$)} There exists a constant $P$ such that 
$p(x)x  \leq  P$ for all $x\in[0,+\infty)$.

\vspace{2mm}

This price function $p$ allows to take into account the dependence w.r.t. the abundance (the more abundant the resource is, the cheaper it will be and conversely).  
 Such a price dependence has already been used to model liquidity effects on financial market, where empirical studies showed that the impact is of the form $p(x)=Pe^{-\beta_1 x^{\beta_2}}$, $x\in\R_+$, for some positive constants $P$, $\beta_1$  and $\beta_2$ (see e.g. \cite{ATHH05,LFM03}). In particular, \textbf{(H$_p$)} is satisfied for this type of dependence. 
 Another basic example for which \textbf{(H$_p$)} holds is the case $p(x)=Px^{-1}$, $x>0$. This last example reflects the inability to buy the natural resource once it is extinct.  \\

We assume that the  manager sells the harvested resource on the market at price $p(X_t^{\mu})$ per unit at time $t$ if $\alpha_t$ is positive, and pays the price $p(X_t^{\mu})$ per unit of natural resource at time $t$ if $\alpha_t$ is negative to renew this one. This provides the global amount $\int_0^Tp(X^\mu_t)X^\mu_t\alpha_tdt$ over the time horizon $[0,T]$.\\

We also suppose that giving an effort is costly for the manager and we consider the classical quadratic cost function $k:\R\rightarrow \R_+$ given by $k(\alpha)=\frac{|\alpha|^2}2,\; \alpha\in \mathbb R$. Thus, the Agent is penalized by the instantaneous amount $k(\alpha_t)$ per unit of time for a given effort $\alpha\in \Ac$. This leads to the global payment $\int_0^Tk(\alpha_t)dt$ over the considered time horizon $[0,T]$. \\

\noindent In our investigation, we recall that the activity of the natural resource manager is regulated by an institution (usually an environment administration) who is taking care about the size of the remaining natural resource. To avoid an over-exploitation, the regulator imposes a tax on the Agent depending on the remaining resource. This tax amount is represented by an $\Fc_T$-measurable random variable $\xi$ and is paid at time $T$. Note that $\xi$ can be either positive or negative. In this last case, it means that the regulator gives a compensation to the manager. \\

\noindent Throughout the paper we assume that the Agent's preferences are given by the exponential utility function $u_A$ defined by
$$
 u_A (x) := -\exp\big(-\gamma x\big)\;,\quad x\in \R\;,
$$
where $\gamma$ is a positive constant corresponding to the risk aversion of the Agent. We define the value function $V_A(\xi)$ of the Agent associated to the taxation policy $\xi$ by
 \begin{equation}\label{pb:agent}
V_A(\xi)  :=  \sup_{\alpha \in \Ac} \E^{\alpha} \Big[ - \exp \Big( -\gamma \big( \int_0^T p(X_s^{\mu})X^\mu_s \alpha_s ds - \int_0^T \frac{|\alpha_s|^2}2 ds-\xi \big) \Big) \Big]\;.
\end{equation}

\noindent For a fixed tax $\xi$, we denote by $\mathcal A^*(\xi)$ the set of efforts $\alpha^*\in \Ac$ satisfying the following equality 
$$
 \E^{{\alpha^*}} \Big[ - \exp \Big( -\gamma \big( \int_0^T p(X_s^{\mu})X_s^{\mu}\alpha^*_s ds - \int_0^T \frac{|\alpha^*_s|^2}2 ds  - \xi\big) \Big) \Big]  = 
V_A(\xi) \;.
$$
An effort $\alpha^* \in \Ac^*(\xi)$ is said to be optimal for the fixed tax $\xi$.

\subsection{The Principal's problem}

The aim of the regulator is to stabilize the resource population at a fixed target size at the maturity $T$. For that, a tax $\xi$ is chosen to incentivize the Agent to manage the natural resource so that the remaining population is close to the targeted size. Hence, the regulator benefits from the tax paid by the Agent and is penalized through a cost function $f$ depending on the size of the resource at maturity $T$. The expected reward  under the action $\alpha\in \Ac$ of the Agent is then  given by
$$
 \E^{{\alpha}} \big[ \xi - f(X_T^{\mu})\big] \;.
$$
Typically, we have in mind $f(x)=c(\beta-x)^+$ meaning that the regulator targets a population size $\beta>0$ at time $T$ for the sustainability of the resource and pays the cost $c$ per unit if the natural resource is over-consumed. This function $f$ can be seen as the amount that the regulator must pay to reintroduce the missing resource.
 
\noindent  We suppose that the resource manager is rational. Therefore, the Principal anticipates that for a tax $\xi$, the Agent will choose an effort $\alpha$ in the set $\mathcal A^*(\xi)$. Note that this set is not necessarily reduced to a singleton\footnote{ In our investigation, we will show that the set $\mathcal A^*(\xi)$ is reduced to a single element.}, hence, as usual in moral hazard problems (see for instance \cite{holmstrom1987aggregation} for the formulation of the moral hazard problem), the regulator solves
\begin{equation}\label{pb:regulator} 
\sup_{\xi} V^P(\xi) \;, \text{ with }V^P(\xi) = \sup_{\alpha\in \mathcal A^*(\xi)} \E^{{\alpha}} \Big[\xi-f(X_T^{\mu}) \Big],
\end{equation}
where $\xi$ lives in a set of suitable contracts defined in the following section.
 
\subsection{Class of contracts and utility reservation}

\noindent We now introduce a reserve utility $R$ which is a negative constant. This reserve means that the regulator cannot penalize too strongly the Agent for economical reasons so that the utility $V_A(\xi)$ expected by the Agent has to be greater than $R$. For instance, we can choose $R$ such that the regulator monitors the Agent by promising the same expected utility as the case without regulation (see Section \ref{application:reservation} for more details). This example reflects a non-punitive taxation policy in which the regulator purely monitors the activities of the Agent. In our model, the sign of the tax $\xi$ is on purpose. This means that the natural resource manager pays the fee to the regulator when $\xi$ is positive and conversely, the regulator compensates the Agent's activity when $\xi$ is negative. Moreover, we need to impose an exponential integrability on the tax $\xi$to ensure the well-posdness of $V_A(\xi)$. We therefore introduce the class $\Cc^\mu_R$ of admissible taxes  defined as the set of $\mathcal F_T$-measurable random variables $\xi$ such that
\begin{equation}\label{reserveutility}
V_A(\xi)  \geq  R \;,
\end{equation}
and there exists a constant $\gamma'>  2\gamma$ such that
\begin{equation}\label{integrabilite:zeta}
 \mathbb E\big[\exp(\gamma' |\xi|)\big]  <  +\infty \;.
\end{equation}
This last condition 
 is very convenient since it allows to deal with the problem by using the theory of BSDEs. Moreover, a straightforward application of Cauchy-Schwarz inequality ensures that the optimization problems $V_A(\xi)$ and $V^P(\xi)$ take finite values.

\section{Optimal effort of the natural resource's manager}\label{section:optimaleffortmanager}

\noindent We first solve the optimal problem of the Agent \eqref{pb:agent} under taxation policy $\xi\in \Cc_R^\mu$.
As in \cite{cvitanic2018dynamic}, the following result shows that solving the Agent problem gives both an optimal effort $\alpha^*$ and a particular representation of the tax $\xi$ with respect to the solution of a BSDE.

\begin{Theorem}\label{thm:agent}
Let $\xi \in \mathcal C^\mu_R$ and Assumption \textbf{(H$_p$)} be satisfied. There exists a unique pair $(Y_0,Z)\in (-\infty,\tilde R]\times \mathbb H^2_T$ with $\tilde R:=\frac{\log(-R)}{\gamma}$ such that 

\begin{enumerate}[(i)]
\item the tax has the following decomposition 
\begin{equation} \label{decomposition:tax}
\xi = Y_0-\int_0^T\big(g(X_t^{\mu},Z_t)+\frac{ \sigma^2}{2}\gamma |Z_t|^2 \big) dt+\int_0^T \sigma Z_t dW_t \;,
\end{equation}
where $g$ is defined for any $(x,z)\in \mathbb R_+\times \mathbb R$ by
$$
g(x,z) = \frac{|  a^*(x,z)|^2}2-p(x) x a^*(x,z)-  a^*(x,z) z\;,
$$
and
\begin{equation} \label{alphastar:thm}
  a^*(x,z) = \big((p(x)x+z)\vee (-\underline M)\big)\wedge \overline M \;,
\end{equation}
\item the value of the Agent is given by
$$
V_A(\xi) =  - \exp(\gamma Y_0) \;, 
$$
\item the process $\alpha^*(\xi)$ defined by $\alpha^*_t(\xi)=  a^*(X_t^{\mu},Z_t)$ is the unique optimal effort associated with the tax $\xi$ given by \eqref{decomposition:tax}.
\end{enumerate}
\end{Theorem}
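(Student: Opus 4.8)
The plan is to solve the exponential-utility maximization problem \eqref{pb:agent} via the BSDE approach of Rouge--El Karoui / Hu--Imkeller--M\"uller, adapted to the change-of-measure formulation used here. First I would observe that under $\P^\alpha$ the Agent's objective is
$$
\E^\alpha\Big[-\exp\Big(-\gamma\big(\textstyle\int_0^T p(X^\mu_t)X^\mu_t\alpha_t\,dt-\int_0^T\tfrac{|\alpha_t|^2}{2}\,dt-\xi\big)\Big)\Big],
$$
so I would introduce the quadratic BSDE
$$
Y_t=\xi+\int_t^T\Big(g(X^\mu_s,Z_s)+\tfrac{\sigma^2}{2}\gamma|Z_s|^2\Big)ds-\int_t^T\sigma Z_s\,dW_s,
$$
whose driver is the one appearing in \eqref{decomposition:tax}. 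Existence and uniqueness of a solution $(Y,Z)$ with $Y$ bounded-from-above moments (here in $\mathcal S^2_T$-type spaces, using the integrability \eqref{integrabilite:zeta}) follows from the Kobylanski/Briand--Hu theory for quadratic BSDEs with exponentially integrable terminal condition; the linear growth of $x\mapsto p(x)x$ via \textbf{(H$_p$)} and the boundedness of $a^*$ keep the driver of quadratic growth in $z$. Setting $Y_0$ as the initial value gives the decomposition \eqref{decomposition:tax} immediately by definition of the BSDE.

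Next I would run the standard martingale-optimality-principle verification. For $\alpha\in\Ac$ define
$$
R^\alpha_t:=-\exp\Big(-\gamma\big(\textstyle\int_0^t p(X^\mu_s)X^\mu_s\alpha_s\,ds-\int_0^t\tfrac{|\alpha_s|^2}{2}\,ds-Y_t\big)\Big),
$$
and compute its $\P^\alpha$-drift using the BSDE dynamics of $Y$ together with $dW_t=dW^\alpha_t-\tfrac{\alpha_t}{\sigma}dt$. The pointwise minimization over the control value is exactly
$$
\inf_{a\in[-\underline M,\overline M]}\Big\{\tfrac{|a|^2}{2}-p(x)x\,a-a z\Big\},
$$
whose unconstrained minimizer is $p(x)x+z$ and whose constrained minimizer is the truncation $a^*(x,z)$ in \eqref{alphastar:thm}; plugging this in cancels precisely the $g+\tfrac{\sigma^2}{2}\gamma|z|^2$ term, so $R^{\alpha^*}$ is a $\P^{\alpha^*}$-martingale while $R^\alpha$ is a $\P^\alpha$-submartingale for every admissible $\alpha$. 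Comparing $R^\alpha_0=-\exp(\gamma Y_0)$ with $\E^\alpha[R^\alpha_T]=\E^\alpha[-\exp(-\gamma(\cdots-\xi))]$ yields $V_A(\xi)=-\exp(\gamma Y_0)$ and optimality of $\alpha^*_t=a^*(X^\mu_t,Z_t)$, giving (ii) and the existence part of (iii). The bound $Y_0\le\tilde R$ is just the reservation constraint \eqref{reserveutility} rewritten through $V_A(\xi)\ge R$.

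For uniqueness of the optimal effort in (iii), I would show $R^\alpha$ is a \emph{strict} submartingale unless $\alpha_t=a^*(X^\mu_t,Z_t)$ $dt\otimes d\P$-a.e.: the drift term is $\gamma R^\alpha_t$ times $\big(\tfrac{|\alpha_t|^2}{2}-p(X^\mu_t)X^\mu_t\alpha_t-\alpha_t Z_t\big)-\big(g+\tfrac{\sigma^2}{2}\gamma|Z_t|^2\big)$, which is $\ge 0$ with equality iff $\alpha_t$ attains the pointwise infimum, and since $a\mapsto \tfrac{a^2}{2}-(p(x)x+z)a$ is strictly convex the constrained minimizer is unique. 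Hence any $\alpha\in\Ac^*(\xi)$ must coincide with $\alpha^*(\xi)$. Uniqueness of $(Y_0,Z)$ follows from uniqueness for the quadratic BSDE (comparison theorem plus the a priori estimates from \eqref{integrabilite:zeta}). The main obstacle I anticipate is the integrability bookkeeping: one must verify that the family $\{R^\alpha_{\tau}\}$ over stopping times is uniformly integrable under $\P^\alpha$ (so that the submartingale comparison passes to time $T$), which is where \eqref{integrabilite:zeta} with $\gamma'>2\gamma$ is used — it must dominate both the $\exp(\gamma\xi)$ factor and the stochastic exponential $H^\alpha_T$ simultaneously via H\"older — and that the quadratic BSDE solution has a $Z$ with enough integrability ($\mathbb H^2_T$ and BMO-type bounds) for all these manipulations to be licit; the boundedness of $\Ac$ and \textbf{(H$_p$)} are exactly what make this work.
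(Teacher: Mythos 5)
Your route is genuinely different from the paper's. You attack the quadratic BSDE with terminal condition $\xi$ head-on (Kobylanski/Briand--Hu existence for exponentially integrable terminal data) and then verify optimality by the martingale optimality principle applied to $R^\alpha$; the paper instead works first with the \emph{exponentiated} value process $J^\alpha_t=\E^\alpha_t[\exp(-\gamma(\cdots-\xi))]$, which solves a BSDE whose generator $G(x,j,\tilde z,a)=\gamma(k(a)-p(x)xa)j-a\tilde z$ is \emph{Lipschitz} (the control set is bounded) with terminal condition $e^{\gamma\xi}\in L^2$ (this is exactly what $\gamma'>2\gamma$ in \eqref{integrabilite:zeta} buys), derives optimality and uniqueness of the effort from the classical comparison and strict comparison theorems for Lipschitz BSDEs, and only afterwards takes logarithms to obtain the quadratic BSDE \eqref{decomposition:tax}, checking $Y\in\mathcal S^2$ and $Z\in\mathbb H^2_T$ by hand. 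The two verifications are equivalent in substance (your pointwise minimization of $a\mapsto \frac{a^2}{2}-(p(x)x+z)a$ over $[-\underline M,\overline M]$ is the paper's minimization of $G$ in $a$, and your strict-submartingale argument is the paper's strict comparison), and you correctly identify the integrability bookkeeping as the place where \eqref{integrabilite:zeta} is consumed. The paper's linearization is precisely designed to make all of that bookkeeping classical.

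The one step of your proposal that does not go through as stated is the uniqueness of $(Y_0,Z)$ ``from uniqueness for the quadratic BSDE (comparison theorem plus a priori estimates).'' With an unbounded, merely exponentially integrable terminal condition, the known uniqueness and comparison theorems for quadratic BSDEs (Briand--Hu, Delbaen--Hu--Richou) require the generator to be convex (or concave) in $z$. Here the generator is $g(x,z)+\frac{\sigma^2\gamma}{2}|z|^2$ with $g(x,\cdot)$ a minimum of affine functions, hence concave; on the region where $a^*(x,z)=p(x)x+z$ is interior one computes $g(x,z)=-\frac{1}{2}(p(x)x+z)^2$, so the generator has second derivative $\sigma^2\gamma-1$ in $z$ there and is not convex when $\sigma^2\gamma<1$. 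So you cannot cite off-the-shelf quadratic-BSDE uniqueness. The gap is repairable within your own scheme: your verification argument identifies $e^{\gamma Y_t}$ with the dynamic value process $J(t,\xi)=\essinf_\alpha J^\alpha_t(\xi)$ for \emph{any} solution $(Y,Z)$ for which the martingale-optimality argument is licit, and the essential infimum is unique; this pins down $Y$, hence $Y_0$, hence $Z$ by uniqueness of the It\^o decomposition. That is essentially how the paper gets uniqueness, via the Lipschitz BSDE for $J$, and you should make this identification explicit rather than appealing to a quadratic comparison theorem.
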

\proof The proof is divided in three steps and is related to the BSDE associated with the value function of the Agent. We first introduce a dynamic extension of the optimization problem \eqref{pb:agent}. We denote by $J(t,\xi)$ the dynamic value function of the Agent at time $t$ for a tax $\xi$ which is defined by
\begin{equation*}
J(t, \xi) := \essinf_{\alpha \in \Ac} \E^{\alpha}_t \Big[ \exp \Big( -\gamma \big( \int_t^T p(X_s^{\mu})  X_s^{\mu}  \alpha_s ds - \int_t^T k(\alpha_s) ds - \xi \big) \Big) \Big] \;.
\end{equation*}
Note that $V_A(\xi) = - J(0,\xi)$.
 \vspace{0.2em}

\noindent \textit{Step 1. Dynamic utility of the Agent and BSDE.} We characterize $J(\cdot,\xi)$ as the unique solution of a BSDE and we derive the optimal control by using comparison results.\\

\noindent Let $\alpha \in \Ac$, we introduce the process $J^\alpha(\xi)$ defined by
$$
J^\alpha_t(\xi) := \E_t^{\P^\alpha} \Big[ \exp \Big( -\gamma \big(  \int_t^T p(X_s^{\mu})X_s^{\mu}\alpha_sds -\int_t^T k(\alpha_s ) ds -\xi \big) \Big) \Big]\;,
$$
so that 
\begin{equation}\label{jjalpha}
J(t, \xi) := \essinf_{\alpha \in \Ac_t} J^\alpha_t(\xi) \;.
\end{equation}

\noindent \textit{Step 1a. Martingale representation and integrability.}

\noindent We know that the process $H_t^\alpha(\exp(\gamma \int_0^t \big(k(\alpha_s)-p(X_s^{\mu})X_s^{\mu}\alpha_s\big) ds) J^\alpha_t(\xi))_{0 \leq t \leq T}$ is a $(\P, \F)$-martingale. In view of the condition \eqref{integrabilite:zeta}, there exists $\varepsilon>0$ and $q>1$ such that $(2+\varepsilon)q\gamma \leq \gamma'$. Hence, for $p>1$ such that $\frac1p+\frac1q=1$, since $\alpha$ is bounded and Condition \eqref{integrabilite:zeta} is satisfied, we get from H\"older's inequality
\beqs
\mathbb E[|\overline{H}_T^\alpha J_T^\alpha(\xi)|^{2+\varepsilon}]&\leq & \mathbb E[|\overline{H}_T^\alpha|^{(2+\varepsilon)p}]^{\frac1p}  \mathbb E[|e^{(2+\varepsilon)q\gamma\xi}|]^{\frac1q} ~< ~ +\infty \;,
\enqs
where $\overline H_t^\alpha:=H_t^\alpha \exp \Big(\gamma \int_0^t \big(k(\alpha_s )-p(X_s^{\mu})X_s^{\mu}\alpha_s \big)ds \Big)$. Hence, by using Doob's maximal inequality, $\overline H^\alpha J^\alpha(\xi)\in \mathcal S^{2+\varepsilon}$. So by using the martingale representation theorem, we know there exists a process $\overline Z^\alpha \in \mathbb H^{2+\varepsilon}_T$ such that 
\beqs
{\overline H}_t^{\alpha}J^\alpha_t(\xi) &=& J^\alpha_0 + \int_0^t  \sigma \overline Z^\alpha_s  d W_s \;,  \quad t\in [0,T] \;.
\enqs
\noindent Therefore, $J^\alpha$ satisfies
\beqs
dJ^\alpha_t(\xi) &=& (\alpha_t \tilde Z^\alpha_t - \gamma (k(\alpha_t) -p(X_t^{\mu})X_t^{\mu}\alpha_t)J^\alpha_t (\xi))dt + \sigma  \tilde Z^\alpha_t  dW_t \;,
\enqs
where $\tilde Z^\alpha_t = \frac{\overline Z^\alpha_t}{{\overline H}_t^{\alpha}} +J_t^\alpha \frac{\alpha_t}{\sigma^2}$ for any $t \in [0,T]$, and $J_T^\alpha (\xi)= \exp(\gamma\xi)$.\\

 We now prove that  $\tilde Z^\alpha\in \mathbb H_T^2$. From \eqref{integrabilite:zeta}, the boundedness of $\alpha$ and Assumption \textbf{(H$_p$)}, there exists a positive constant $C>0$ such that
\beqs
\mathbb E \Big[\int_0^T  | \frac{\overline Z^\alpha_t}{{\overline H}_t^{\alpha}} +J_t^\alpha \frac{\alpha_t}{\sigma^2}|^2 dt \Big]&\leq & 2\Big( \mathbb E\Big[\int_0^T  | \frac{\overline Z^\alpha_t}{{\overline H}_t^{\alpha}}|^2dt\Big] +  \mathbb E\Big[\int_0^T |J_t^\alpha \frac{\alpha_t}{\sigma^2}|^2 dt\Big]\Big)\\
&\leq & C\Big(1+ \mathbb E \Big[\int_0^T  | \frac{\overline Z^\alpha_t}{{\overline H}_t^{\alpha}}|^2dt\Big] \Big) \;.
\enqs
We set $\tilde q:=1+\frac{\varepsilon}2$ and $\tilde p>1$ such that $\frac1{\tilde p}+\frac1{\tilde q}=1$. Using H\"older and BDG Inequalities and since $\overline Z^\alpha \in \mathbb H^{2+\varepsilon}_T$, we get
\beqs
\mathbb E [\int_0^T  | \frac{\overline Z^\alpha_t}{{\overline H}_t^{\alpha}}|^2dt] &\leq &\mathbb E[ \sup_{t\in [0,T]} |({\overline H}_t^{\alpha})^{-1}|^2\int_0^T  | \overline Z^\alpha_t|^2dt]\\
&\leq & \mathbb E[ \sup_{t\in [0,T]} |({\overline H}_t^{\alpha})^{-1}|^{2\tilde p}]^{\frac1{\tilde p}} \mathbb E[\big(\int_0^T  | \overline Z^\alpha_t|^2dt\big)^{\tilde q}]^{\frac{1}{\tilde q}}\\
&<&+\infty \;.
\enqs
Consequently, we get $\tilde Z^\alpha\in \mathbb H_T^2$.\vspace{0.5em}

\noindent \textit{Step 1b. Comparison of BSDEs and optimal effort.} We now turn to the characterization of the solution to \eqref{jjalpha} by a BSDE. We introduce the following BSDE
\beq\label{bsde:jinf}
d\underline{J}_t(\xi) &=&-\inf_{a\in [- \underline M, \overline M]} G(X_t^{\mu},\underline{J}_t (\xi),\underline{\tilde Z}_t, a)dt + \sigma  \underline{\tilde Z}_t  dW_t \;,\quad \underline{J}_T (\xi)= \exp(\gamma\xi) \;,
\enq
where 
\beqs
G(x,j,\tilde z,a) &:=&  \gamma (k(a) -p(x)xa)j- a\tilde z \;.
\enqs
 This BSDE has a Lipschitz generator and square integrable terminal condition from \eqref{integrabilite:zeta}. Therefore it admits a unique solution in $\mathcal S^2\times \mathbb H_T^2$. Moreover, for any $\alpha\in \mathcal A$, we notice that $(J^\alpha(\xi),\tilde Z^\alpha)$ satisfies the following BSDE
\beqs
d{J}^\alpha_t(\xi) &=&-G(X_t^{\mu},{J}^\alpha_t (\xi),{\tilde Z}^\alpha_t, \alpha_t)dt + \sigma  {\tilde Z}^\alpha_t  dW_t \;,\quad {J}^\alpha_T (\xi)= \exp(\gamma\xi) \;.
\enqs
By classical comparison Theorem, we have 
\beqs
 \underline{J}_t(\xi) &\leq & J(t,\xi) \;,\quad \forall t\in [0,T] \;. 
  \enqs
Then, we notice that BSDE \eqref{bsde:jinf} can be rewritten 
\beqs
 d\underline{J}_t(\xi) &= & -G\Big(X_t^{\mu},\underline{J}_t (\xi),\underline{\tilde Z}_t, {a^*}\big(X_t^{\mu},\frac{\underline{\tilde Z}_t}{\gamma\underline{J}_t (\xi)}\big )\Big)dt + \sigma  \underline{\tilde Z}_t  dW_t \;,\quad \underline{J}_T (\xi) ~= ~\exp(\gamma\xi)\;.
\enqs

\noindent In particular, we have $\underline{J}(\xi)= J^{ a^*\big(X^{\mu},\frac{\underline{\tilde Z}}{\gamma\underline{J} (\xi)}\big)}(\xi)$ by uniqueness of the solution to BSDE (\ref{bsde:jinf}). Therefore, we get
\beq\label{optim:Jpreuve}
\underline{J}_t(\xi) &= & J(t,\xi) \quad  \text{ and } \quad a^*\big(X^{\mu},\frac{\underline{\tilde Z}}{\gamma\underline{J} (\xi)}\big)\in \mathcal A^*(\xi)\;.
\enq\vspace{0.2em}
\noindent We now prove that this optimal effort is unique. Let $\tilde \alpha\in \mathcal A$ be an other optimal effort, then we have 
\beqs
J_0^{\tilde \alpha} &=& J_0^{a^*\big(X^{\mu},\frac{\underline{\tilde Z}}{\gamma\underline{J} (\xi)}\big)}\;.
\enqs
 From strict comparison Theorem (see for instance \cite[Theorem 2.2]{elkaroui1997backward}) we get $J^{\tilde \alpha}= J^{a^*\big(X^{\mu},\frac{\underline{\tilde Z}}{\gamma\underline{J} (\xi)}\big)}$ and 
\beqs
G\Big(X^{\mu},\underline{J} (\xi),\underline{\tilde Z}, {a^*}\big(X^{\mu},\frac{\underline{\tilde Z}}{\gamma\underline{J} (\xi)}\big )\Big) &=& G\Big(X^{\mu},\underline{J} (\xi) ,\underline{\tilde Z}, \tilde \alpha\Big) \;,\quad  dt\otimes d\mathbb P-a.e.
\enqs
By the uniqueness of the minimizer of $G(X_t^{\mu},\underline{J}_t (\xi),\underline{\tilde Z}_t,\cdot),$ we deduce that ${\tilde \alpha}= {a^*\big(X^{\mu},\frac{\underline{\tilde Z}}{\gamma\underline{J} (\xi)}\big)}.$\vspace{0.5em}

\noindent \textit{Step 2. Representation of $\xi$.}

\noindent Since, by definition, the process $J^{a^*\big(X^{\mu},\frac{\underline{\tilde Z}}{\gamma\underline{J} (\xi)}\big)}$ is positive, we can define the processes $Y$ and $Z$ by
\beq\label{def:YJ}
Y~:=~ \frac{\log\Big(J^{a^*\big(X^{\mu},\frac{\underline{\tilde Z}}{\gamma\underline{J} (\xi)}\big)}\Big)}{\gamma} & \text{ and } &  Z~:=~\frac{\underline{\tilde Z}}{\gamma J^{a^*\big(X^{\mu},\frac{\underline{\tilde Z}}{\gamma\underline{J} (\xi)}\big)}} \;.\enq We obtain
\begin{align*}
 dY_t &= -\big(k(  a^*(X^{\mu},Z_t ))-p(X_t^{\mu})X_t^{\mu}  a^*(X^{\mu},Z_t )-  a^*(X^{\mu},Z_t ) Z_t+\frac{ \sigma^2}{2}\gamma |Z_t|^2 \big)dt + \sigma  Z_t dW_t \;,\\
 Y_T&=\xi \;.
\end{align*}

\noindent We first prove $Y\in \mathcal S^2$. Note for any $t\in [0,T]$, by using Jensen inequality, we have 
\beqs
 \frac1\gamma\log(\underline J_t(\xi)) &\geq &\mathbb E^{\alpha^*}_t \Big[   \int_t^T  \big( k(\alpha^*_s )  - p(X_s^{\mu})X_s^{\mu}\alpha^*_s \big)ds + \xi   \Big] ~\geq ~ \mathbb E_t^{\alpha^*}[ \xi]  -  TPM \;,
 \enqs
where $\alpha^*$ stands for $a^*\big(X^{\mu},\frac{\underline{\tilde Z}}{\gamma\underline{J} (\xi)}\big)$ and $M = \underline M \vee \overline M$. We then notice 
\begin{itemize}
\item if $\underline J_t(\xi)\geq 1$ we have
\beqs\label{estim1:sec3}
0~~\leq~~\log(\underline J_t(\xi)) & \leq & \underline J_t(\xi) \;,
\enqs 
\item  if $0\leq \underline J_t(\xi)< 1$ we have
\beqs\label{estim2:sec3}
\Big|\frac1\gamma\log(\underline J_t(\xi))\Big| & \leq & TPM+\mathbb E_t^{\alpha^*}[|\xi|]  \;.
\enqs
\end{itemize}
 Hence, there exists a constant $C>0$ such that
\beqs
 \Big| \frac1\gamma\log(\underline J_t(\xi))\Big|^2 & \leq & C(1+\mathbb E_t^{\alpha^*}[ |\xi|]^2)+\frac{1}{\gamma^2}|\underline J_t(\xi)|^2 \;, \quad t \in [0,T]\;.
\enqs
From Young inequality, we get
\beqs
\mathbb E\Big[\sup_{t\in [0,T]} \Big| \frac1\gamma\log(\underline J_t(\xi))\Big|^2\Big] & \leq & 2C\Big(1+\E\Big[\sup_{t\in [0,T]}\Big( {H^{\alpha^*}_T\over H^{\alpha^*}_t}\Big)^{4}\Big]+\E[|\xi|^{4}]\Big)+\frac{1}{\gamma^2} \mathbb E\Big[\sup_{t\in [0,T]}|\underline J_t(\xi)|^2\Big] \;.
\enqs
Since $\alpha^*$ is bounded, we have $\E\Big[\sup_{t\in [0,T]}\Big( {H^{\alpha^*}_T\over H^{\alpha^*}_t}\Big)^{4}\Big]<+\infty$. 

Using $\underline J(\xi)\in \mathcal S^2$,  we obtain
\[
\mathbb E[\sup_{t\in [0,T]} \Big| \frac1\gamma\log(\underline J_t(\xi))\Big|^2 ] <  +\infty \;.
\]
Which implies $Y\in \mathcal S^2$.\\

 We now check $Z\in \mathbb H_T^2$. To this end, we use a localization procedure by  introducing the sequence of  stopping times $(\tau_n)_{n\geq1}$ defined by 
\beqs
 \tau_n & := & \inf\Big\{t\in [0,T],\; \int_0^t |Z_s|^2ds\geq n\Big\}\wedge T \;,
\enqs 
for any $n\geq 1$. Similarly to the proof of \cite[Theorem 2]{briand2006bsde}, we apply It\^o's Formula to $\iota(|Y|)$ where $\iota(x)=\frac1{\gamma^2}(e^{\gamma x}-\gamma x-1)$ for $x\in\R$. We obtain 
\beqs
\iota(|Y_0|)&=& \iota(|Y_{\tau_n}|) +\int_0^{\tau_n} \Big(\iota'(|Y_s|)\textrm{sgn}(Y_s)\big(g(X_s^\mu,Z_s)+{\sigma^2\gamma \over 2} |Z_s|^2\big)-\frac{1}2\iota''(|Y_s|)\sigma^2 |Z_s|^2\Big)ds\\
 & & -\int_0^{\tau_n} \sigma \iota'(|Y_s|)\textrm{sgn}(Y_s)Z_s dW_s \;.
\enqs
Since $\iota''-\gamma \iota'=1$ and $\iota'(x)\geq 0$ for $x\geq 0$, we get from BDG and Young inequalities
\beq\label{las-estim-Agentpb}
\E\Big[\int_0^{\tau_n}  |Z_s|^2 ds\Big] & \leq & C\Big(1+\E\Big[\sup_{t\in[0,T]}e^{\gamma|Y_t|}+\int_0^Te^{\gamma|Y_t|}\big(1+|Y_s|\big)\Big]\Big)\;.
\enq
From the definition of $Y$ and since $\alpha^*$ is bounded, there exists a constant $C$ such that
\beqs
2\gamma|Y_t| & \leq & C+ 2\gamma\E_t^{\alpha^*}[|\xi|]\;.
\enqs
Using Jensen and H\"older inequalities we get another constant $C'$ such that
\[
\E\Big[\sup_{t\in[0,T]}e^{2\gamma|Y_t| }\Big]  \leq  C'\E\Big[\sup_{t\in[0,T]}\Big({H_T^{\alpha^*}\over H_t^{\alpha^*}}\Big)^{\gamma'\over \gamma'-\gamma}\Big]^{ \gamma'-\gamma\over \gamma'} \E\Big[e^{2\gamma'|\xi| }\Big]^{\gamma \over \gamma'}\;.
\]
Since $\alpha^*$ is bounded, we have $\E\Big[\sup_{t\in [0,T]}\Big( {H^{\alpha^*}_T\over H^{\alpha^*}_t}\Big)^{\gamma'\over \gamma'-\gamma}\Big]<+\infty$ and we get 
from \eqref{integrabilite:zeta}
\beqs
\E\Big[\sup_{t\in[0,T]}e^{2\gamma|Y_t|}\Big] & < & +\infty\;.
\enqs
Sending $n$ to $\infty$ in \eqref{las-estim-Agentpb}, we get from Fatou's Lemma $Z\in\mathbb H ^2$.

\vspace{0.5em}
\noindent \textit{Step 3. Conclusion.} We directly deduce $(ii)$ and $(iii)$ from \eqref{def:YJ} together with \eqref{optim:Jpreuve} given that $(i)$ has been proved in Step 2.

\ep

\section{The problem of the regulator}\label{section:regulatorpb}
In this section, we focus on the regulation policy. In view of \eqref{pb:regulator} and Theorem \ref{thm:agent} the regulator's problem turns to be

\beq \label{pb:regul}
V^P_R &=&\sup_{\xi \in \Cc^\mu_R} \mathbb E^{\alpha^*(\xi)}[\xi-f(X_T^{\mu})] \;.
\enq
We first provide almost optimal contracts for a bounded parameter $\mu$ by a PDE approach. We then extend the study to the logistic  case with $\mu(x)=x$.

\subsection{Almost optimal strategies for bounded auto-degradation and cost parameters}
We introduce the following class of contracts
\beq
\nonumber \Xi^\mu& := &\Big\{Y_T^{Y_0,Z,\mu}=Y_0-\int_0^T\big(h(X_t^{\mu},Z_t)+\frac{ \sigma^2}{2}\gamma |Z_t|^2\big)dt+\int_0^T\sigma Z_t dW_t\;,\\
&& \qquad Y_0\leq \tilde R\;, ~Z\in \mathcal Z \Big\} \;,\label{defXimu} \enq
where $\mathcal Z$ denotes the subset of predictable processes of $\mathbb H^2_T$ such that 
\beq\label{integr:z}
\mathbb E[\exp(\gamma' |Y_T^{Y_0,Z,\mu}|)] &< &+\infty\;,
\enq
for some $\gamma'>  2\gamma$ and we recall that $\tilde R={\log(-R)\over \gamma}$. When $\mu$ is the identity, we omit the exponent $\mu$ in the previous definitions.\\

\noindent From Theorem \ref{thm:agent}, constraint \eqref{reserveutility} and integrability conditions \eqref{integrabilite:zeta} and \eqref{integr:z}, the set $\mathcal C^\mu_R$ coincides with $\Xi^\mu$ so that the regulator's problem  \eqref{pb:regul} becomes
\beq \label{value:reg:ximu}
V^P_R&=&\sup_{Y_0\leq \tilde R, ~Z\in \mathcal Z} \mathbb E^{{a^*(X^\mu,Z)}}[Y^{Y_0,Z,\mu}_T-f(X_T^{\mu})]\;,
\enq
with  
\beqs
Y_t^{Y_0,Z,\mu} &=& Y_0-\int_0^t \big(k(\alpha^*_s)-p(X^{\mu}_s) X_s^{\mu} \alpha_s^*+\frac{ \sigma^2}{2}\gamma |Z_s|^2 \big)dt+\int_0^t \sigma Z_s dW^*_s \;,\quad t\in[0,T]\;,
\enqs
where $W^*$ stands for $W^{ a^*(X^{\mu},Z)}$.
We notice that the function to maximize in $V^P_R$ is nondeacreasing w.r.t. the variable $Y_0$. Therefore the constraint $Y_0\leq \tilde R$ is saturated and \eqref{value:reg:ximu} can be rewritten under the following form 
\beq\label{value:reg:ximu2}
V_R^P &=& \sup_{ Z\in \mathcal Z} \mathbb E^{a^*(X^\mu,Z)}[Y^{\tilde R,Z,\mu}_T-f(X_T^{\mu})] \;.
\enq
To construct a solution to the problem \eqref{value:reg:ximu2}, we introduce the related HJB PDE given by

\begin{equation}\label{hjb}\begin{cases}
&-\partial_t v-H\Big(x,\partial_{x} v(t,x), \partial_{xx} v(t,x)\Big) =0 \;, \quad (t,x)\in [0,T)\times \mathbb R_+^* \;,\\
&v(T,x)=- f(x) \;, \quad x\in \mathbb R_+^* \;,
\end{cases}\end{equation}
where the Hamiltonian $H$ is given by
\beqs
H(x,\delta_1,\delta_2) & = & \sup_{z\in\R}\left\{xp(x)  a^*(x,z) -k(  a^*(x,z))-\frac{ \sigma^2}{2}\gamma z^2 + x(\lambda-\mu(x)-  a^*(x,z))\delta_1 \right\}\\
 & & 
+\frac{\sigma^2}{2} x^2 \delta_2\;,\quad(x,\delta_1,\delta_2)\in \R_+^*\times\R\times\R\;,
\enqs
and $a^*$ is given by \eqref{alphastar:thm}.
We first extend PDE \eqref{hjb} to the whole domain $[0,T]\times\R$ by considering the change of variable $w(t,y):= v(t,e^y)$ for any $(t,y)\in [0,T]\times \mathbb R$. We get the following PDE
\begin{equation}\label{pde:w}\begin{cases}
&-\partial_t w-\mathcal H\Big(y,\partial_{y} w(t,y), \partial_{yy} w(t,y)\Big)= 0\; ,\quad (t,y)\in [0,T)\times \mathbb R \;,\\
&w(T,y)= -f(e^y)\;,\quad y\in \mathbb R \;,
\end{cases}\end{equation}
where
\beqs
\mathcal H(y,\delta_1,\delta_2) & := & \sup_{z\in\R}\left\{ e^yp(e^y)a^*(e^y,z)-{a^*(e^y,z)^2\over 2}-\frac{ \sigma^2}{2}\gamma z^2 + (\lambda-{\sigma^2\over 2}-\mu(e^y)-a^*(e^y,z))\delta_1 \right\}\\
 & & 
+\frac{\sigma^2}{2}  \delta_2\;,\quad(y,\delta_1,\delta_2)\in \R\times\R\times\R\;.
\enqs
Our aim is to construct a regular solution to this PDE to proceed by verification.
Unfortunately, the coefficients of PDE \eqref{pde:w} are not smooth enough to do so. 
To overcome this issue, we provide a smooth approximation $\mathcal H_\eps$ of $\mathcal H$ for which we get regular solutions. 

\vspace{2mm}

Moreover, we introduce the following assumption, which ensure that the optimal control derived from the PDE satisfies the admissibility condition, \textit{i.e.} belongs to $\mathcal Z$.

\vspace{2mm}

\noindent \textbf{(H')} There exists $\nu\in(0,1)$ such that  
\begin{itemize}
\item[$(i)$] the map $y\mapsto \mu(e^y)$ belongs to $C^{1+\nu}(\R)$,
\item[$(ii)$]the map $y \mapsto f(e^y)$ belongs to $C^{2+\nu}(\R)$,

\item[$(iii)$]the map $y\mapsto p(e^y)e^y$ belongs to $C^{1+\nu}(\R)$.
\end{itemize}

\begin{Proposition}\label{Approx-Hamiltonian}
Under \textbf{(H')}, there exists a family $\{\mathcal H_\eps,~\eps>0\}$ of functions from $\R^3$ to $\R$ such that
 the PDE
\begin{equation}\label{pde:w-eps}\begin{cases}
&-\partial_t w_\eps-\mathcal H_\eps\Big(y,\partial_{y} w_\eps(t,y), \partial_{yy} w_\eps(t,y)\Big)= 0\; ,\quad (t,y)\in [0,T)\times \mathbb R \;,\\
&w_\eps(T,y)= -f(e^y)\;,\quad y\in \mathbb R \;,
\end{cases}\end{equation}
admits a unique solution $w_\eps$ in $C^{2+\nu}([0,T]\times\R)$ and
\beq\label{HmoinsHeps}
\sup_{\R^3}\big|\Hc-\Hc_\eps\big| & \leq & \eps
\enq
 for any $\eps>0$\;. 
\end{Proposition}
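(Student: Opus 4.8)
The plan is to construct $\mathcal{H}_\varepsilon$ in two stages: first regularize the inner optimization so that the supremum over $z$ is replaced by an expression with smooth dependence on $(y,\delta_1,\delta_2)$, then mollify whatever roughness remains from the coefficients $\mu(e^y)$, $p(e^y)e^y$. Observe first that the supremum defining $\mathcal{H}$ can be computed more explicitly: writing $a^*(e^y,z) = \big((p(e^y)e^y+z)\vee(-\underline M)\big)\wedge\overline M$, the map $z\mapsto e^yp(e^y)a^*(e^y,z)-\tfrac12 a^*(e^y,z)^2 - \tfrac{\sigma^2}{2}\gamma z^2 - a^*(e^y,z)\delta_1$ is strictly concave and coercive in $z$ (the $-\tfrac{\sigma^2}{2}\gamma z^2$ term dominates), so the supremum is attained at a unique $z^*(y,\delta_1)$, and $\mathcal{H}(y,\delta_1,\delta_2) = \Phi(y,\delta_1) + \tfrac{\sigma^2}{2}\delta_2$ where $\Phi$ is concave in $\delta_1$ and globally Lipschitz in $\delta_1$ (with constant $\le M = \underline M\vee\overline M$, since $\partial_{\delta_1}\Phi = -a^*$ at the optimizer). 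The linearity in $\delta_2$ is already perfectly smooth, so the task reduces to producing a smooth $\Phi_\varepsilon$ with $\sup|\Phi-\Phi_\varepsilon|\le\varepsilon$, uniformly, and then setting $\mathcal{H}_\varepsilon(y,\delta_1,\delta_2) := \Phi_\varepsilon(y,\delta_1) + \tfrac{\sigma^2}{2}\delta_2$.

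To build $\Phi_\varepsilon$: because $\Phi$ is jointly continuous, Lipschitz in $\delta_1$ uniformly in $y$, and — using \textbf{(H')}(i) and (iii), which give $y\mapsto\mu(e^y)$ and $y\mapsto p(e^y)e^y$ in $C^{1+\nu}$ hence $C^1$ with bounded, $\nu$-Hölder derivatives — also Lipschitz in $y$ uniformly on the relevant range, one can mollify $\Phi$ by convolution with a smooth kernel $\rho_\delta$ on $\R^2$, choosing $\delta = \delta(\varepsilon)$ small enough that $\sup_{\R^2}|\Phi - \Phi*\rho_\delta|\le\varepsilon$; this is immediate from uniform continuity of $\Phi$ together with the uniform Lipschitz bounds, and $\Phi*\rho_\delta\in C^\infty$. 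One must also check that $\Phi_\varepsilon$ inherits the structural properties needed for the PDE theory to apply: boundedness of $\partial_{\delta_1}\Phi_\varepsilon$ (uniform Lipschitz constant in $\delta_1$, preserved by convolution), local regularity in $y$, and the right behavior as $|y|\to\pm\infty$ so that the terminal datum $-f(e^y)$ (in $C^{2+\nu}$ by \textbf{(H')}(ii)) is compatible. Then $\mathcal{H}_\varepsilon$ is affine in $\delta_2$ with the fixed coefficient $\tfrac{\sigma^2}{2}>0$ and Lipschitz in $\delta_1$, so PDE \eqref{pde:w-eps} is a uniformly parabolic semilinear (in fact quasilinear with bounded gradient nonlinearity) Cauchy problem with $C^{2+\nu}$ data; by standard Schauder / quasilinear parabolic existence-uniqueness theory (e.g. Ladyzhenskaya–Solonnikov–Ural'tseva, or Friedman), it admits a unique classical solution $w_\varepsilon\in C^{2+\nu}([0,T]\times\R)$, possibly after an a priori sup-bound on $w_\varepsilon$ obtained from the maximum principle using the linear growth of the nonlinearity and boundedness of $f(e^\cdot)$.

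The main obstacle is the double role of $\delta_1$: the nonlinearity in \eqref{pde:w-eps} depends on $\partial_y w_\varepsilon$ through $\Phi_\varepsilon(y,\partial_y w_\varepsilon)$, and to invoke a global-in-$y$ Schauder estimate one needs either a priori control of $\partial_y w_\varepsilon$ or a nonlinearity that is globally Lipschitz in $\delta_1$ with constants independent of $y$ — which is exactly why the observation that $|\partial_{\delta_1}\Phi|\le M$ (uniformly, thanks to the bounded action set $[-\underline M,\overline M]$) is essential, and why it must survive mollification. A secondary point requiring care is that $\Phi$ is Lipschitz but not a priori $C^{1}$ in $y$ before mollification unless \textbf{(H')}(i),(iii) are invoked; one should verify that those hypotheses indeed yield the uniform modulus of continuity in $y$ needed to control $\sup|\Phi-\Phi_\varepsilon|$ by $\varepsilon$. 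Once these are in place, the estimate \eqref{HmoinsHeps} is immediate from the construction, since $\mathcal{H}-\mathcal{H}_\varepsilon = \Phi - \Phi_\varepsilon$ has no $\delta_2$ dependence, and existence/uniqueness/regularity of $w_\varepsilon$ follows from classical parabolic PDE theory applied to the smoothed, structurally well-behaved equation.
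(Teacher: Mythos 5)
Your overall strategy (smooth the Hamiltonian uniformly, then solve the smoothed PDE by classical parabolic theory) is the same as the paper's, but two steps in your construction of $\Hc_\eps$ do not go through as written. First, the map $z\mapsto e^yp(e^y)a^*(e^y,z)-\tfrac12 a^*(e^y,z)^2-\tfrac{\sigma^2}{2}\gamma z^2-a^*(e^y,z)\delta_1$ is \emph{not} concave in $z$: writing $g(a)=e^yp(e^y)a-a^2/2-a\delta_1$, the composition $g(a^*(e^y,\cdot))$ has an upward jump of its derivative at the clamping thresholds whenever $g$ is increasing there, and adding the smooth concave term $-\tfrac{\sigma^2}{2}\gamma z^2$ cannot remove a convex kink. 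So the maximizer $z^*$ need not be unique, and the structure of $\Phi$ cannot be read off from a first-order condition; this is exactly why the paper splits the supremum into the three regions $e^yp(e^y)+z<-\underline M$, $\in[-\underline M,\overline M]$, $>\overline M$ and writes $\Hc$ as a max of three explicit functions $\Kc_1,\Kc_2,\Kc_3$ plus the exact linear term $(\lambda-\tfrac{\sigma^2}{2}-\mu(e^y))\delta_1+\tfrac{\sigma^2}{2}\delta_2$.

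Second, and more seriously, the claim that a single mollification parameter $\delta(\eps)$ gives $\sup_{\R^2}|\Phi-\Phi*\rho_\delta|\le\eps$ fails if $\Phi$ contains the drift term. Your identity $\Hc=\Phi+\tfrac{\sigma^2}{2}\delta_2$ forces $\Phi$ to include $(\lambda-\tfrac{\sigma^2}{2}-\mu(e^y))\delta_1$, whose increment in $y$ is of order $|\mu(e^{y})-\mu(e^{y'})|\,|\delta_1|$; the mollification error therefore grows linearly in $|\delta_1|$ and cannot be made uniformly small on $\R^2$. (If instead you exclude that term from $\Phi$, your decomposition of $\Hc$ is missing it.) The fix — which is the paper's construction — is to leave the terms that are affine in $\delta_1$ with $C^{1+\nu}$ coefficients (the drift term, $\Kc_1$, $\Kc_2$, and the affine-in-$\delta_1$ branches of $\Kc_3$ for $|\delta_1|$ large) \emph{unsmoothed}, and to regularize only the genuinely nonlinear piece, namely the quadratic branch $Q(e^yp(e^y),\delta_1)$ of $\Kc_3$ on a compact set of $(e^yp(e^y),\delta_1)$ values, patched with smooth cutoffs, together with a smoothed max of the three branches. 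Relatedly, the uniform joint Lipschitz continuity of the sup-part that you invoke is true but not ``immediate'': verifying it essentially requires computing $\Kc_1,\Kc_2,\Kc_3$ explicitly, i.e.\ the case analysis you tried to avoid. Your final step (existence and uniqueness in $C^{2+\nu}([0,T]\times\R)$ by parabolic theory) is consistent with the paper, which checks the structural conditions of Oleinik--Kruzhkov and applies their Theorem 14.
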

The proof of Proposition \ref{Approx-Hamiltonian} consists in an approximation by regularization of the original Hamiltonian $\Hc$. As it is quite technical we postpone this proof to the appendix.\\ 

\noindent We are now able to describe almost optimal contracts and related almost optimal efforts using the functions $w_\eps$ given by Proposition \ref{Approx-Hamiltonian}.

\begin{Theorem}\label{thm:regulator}  Suppose that  \textbf{(H')} holds. 
 For any $\eps>0$, 
the tax policy $\xi_\eps$ given by
\[
\xi_\eps= \tilde R-\int_0^T\big( g(X_t^{\mu}, Z_t^\eps)+\frac{1}{2}\sigma^2\gamma |Z^\eps_t|^2+ Z_t^\eps (\lambda-\mu(X_t^{\mu})) \big)dt+\int_0^T \frac{Z_t^\eps}{X_t^{\mu}}dX_t^{\mu} \;,
\]
where 
\beq\label{eqZstar}
Z^\eps_t &=&- \frac{\partial_x w_\eps(t,\log(X_t^{\mu}))}{1+\gamma\sigma^2}\;, \quad t\in[0,T]\;,
\enq
is $2T\eps$-optimal for the regulator problem:
\beqs
V_R^P &\leq&  \mathbb E^{a^*(X^\mu,Z^\eps)}\big[\xi_\eps-f(X_T^{\mu})\big]+2T\eps \;.
\enqs

\end{Theorem}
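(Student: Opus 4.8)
\noindent The plan is to treat the smooth function $w_\eps$ of Proposition~\ref{Approx-Hamiltonian} as an approximate value function for the Markovian control problem \eqref{value:reg:ximu2} and to argue by verification. I would first check that $\xi_\eps$ is an admissible contract. Expanding $\int_0^T \tfrac{Z_t^\eps}{X_t^{\mu}}\,dX_t^{\mu}$ with the $\P$--dynamics \eqref{sdeprimal}, one sees that $\xi_\eps$ is exactly of the form \eqref{decomposition:tax} with $Y_0=\tilde R$ and $Z=Z^\eps$; Theorem~\ref{thm:agent} then gives at once that the Agent's unique best response to $\xi_\eps$ is $a^*(X^\mu,Z^\eps)$, that the associated continuation process is $Y^{\tilde R,Z^\eps,\mu}$ with terminal value $\xi_\eps$, and that $V_A(\xi_\eps)=-\exp(\gamma\tilde R)=R$, so the reservation constraint \eqref{reserveutility} is met (with equality). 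Since $w_\eps\in C^{2+\nu}([0,T]\times\R)$, both $w_\eps$ and $\partial_y w_\eps$ are bounded, hence $Z^\eps$ is bounded; combined with $|p(x)x|\le P$, $|a^*|\le \underline M\vee\overline M$ and the boundedness of $y\mapsto\mu(e^y)$ under \textbf{(H')}, the drift in the decomposition of $\xi_\eps$ is bounded too, so $\E[\exp(\gamma'|\xi_\eps|)]<+\infty$ for every $\gamma'>0$; thus $Z^\eps\in\mathcal Z$ and $\xi_\eps\in\Cc_R^\mu$.

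\noindent Next I would set $L_t:=\log X_t^{\mu}$ (well defined and finite since \eqref{sdeprimal} has a positive solution) and observe that under $\P^{a^*(X^\mu,Z)}$,
\[
dL_t=\bigl(\lambda-\tfrac{\sigma^2}{2}-\mu(e^{L_t})-a^*(e^{L_t},Z_t)\bigr)\,dt+\sigma\,dW^*_t .
\]
Applying It\^o's formula to $Y_t^{\tilde R,Z,\mu}+w_\eps(t,L_t)$, substituting $\partial_t w_\eps=-\mathcal H_\eps$ from \eqref{pde:w-eps}, bounding $-\mathcal H_\eps\le-\mathcal H+\eps$ by \eqref{HmoinsHeps} and bounding $\mathcal H$ from below by the bracket inside its supremum taken at $z=Z_t$, the first- and second-order terms in $w_\eps$ cancel and (using $g(x,z)=k(a^*(x,z))-p(x)x\,a^*(x,z)-a^*(x,z)z$) one finds that the drift of $Y_t^{\tilde R,Z,\mu}+w_\eps(t,L_t)$ is at most $\eps$. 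Taking $\P^{a^*(X^\mu,Z)}$--expectations, after a localisation as in the proof of Theorem~\ref{thm:agent} (legitimate since $w_\eps,\partial_y w_\eps$ are bounded and $Z\in\mathcal Z$) and using the terminal condition $w_\eps(T,y)=-f(e^y)$, gives
\[
\E^{a^*(X^\mu,Z)}\bigl[Y_T^{\tilde R,Z,\mu}-f(X_T^{\mu})\bigr]\le \tilde R+w_\eps(0,\log X_0)+T\eps ,
\]
and taking the supremum over $Z\in\mathcal Z$ yields, via \eqref{value:reg:ximu2}, $V_R^P\le \tilde R+w_\eps(0,\log X_0)+T\eps$. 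Running the same computation with $Z=Z^\eps$, and using that by \eqref{eqZstar} the process $Z^\eps_t$ is a maximiser of the supremum defining $\mathcal H(L_t,\partial_y w_\eps,\partial_{yy}w_\eps)$ — so that $\mathcal H$ equals that bracket at $z=Z^\eps_t$ — together with $-\mathcal H_\eps\ge-\mathcal H-\eps$, the drift of $Y_t^{\tilde R,Z^\eps,\mu}+w_\eps(t,L_t)$ is now at least $-\eps$; the same steps (with $Z^\eps$ bounded, so localisation is trivial) give $\E^{a^*(X^\mu,Z^\eps)}[\xi_\eps-f(X_T^{\mu})]\ge \tilde R+w_\eps(0,\log X_0)-T\eps$. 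Combining the two displays gives $\E^{a^*(X^\mu,Z^\eps)}[\xi_\eps-f(X_T^{\mu})]\ge V_R^P-2T\eps$, as claimed.

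\noindent The structural part of this argument is short; the delicate point is the identification of $Z^\eps$ in \eqref{eqZstar} as a maximiser of $\mathcal H$. Because $a^*(x,\cdot)$ is clipped to $[-\underline M,\overline M]$, the one-dimensional optimisation defining $\mathcal H$ is not globally concave, and one must distinguish the regime where the induced effort lies strictly inside $[-\underline M,\overline M]$ — where the first-order condition gives exactly $z=-\partial_y w_\eps/(1+\gamma\sigma^2)$ — from the regimes where it saturates at $-\underline M$ or $\overline M$; this is precisely the Hamiltonian analysis carried out in the proof of Proposition~\ref{Approx-Hamiltonian}. The other point to watch is that every estimate above relies on the a priori bounds furnished by $w_\eps\in C^{2+\nu}([0,T]\times\R)$ together with the boundedness built into \textbf{(H')} and \textbf{(H$_p$)}: without this regularity the stochastic integrals could not be controlled and $\xi_\eps$ would not be known to satisfy the exponential integrability \eqref{integrabilite:zeta} needed for $\xi_\eps\in\Cc_R^\mu$. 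In short, the real work is packaged into Proposition~\ref{Approx-Hamiltonian}, and the verification proper is the elementary It\^o computation above.
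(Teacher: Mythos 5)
Your proposal is correct and follows essentially the same route as the paper: a verification argument applying It\^o's formula to $Y_t^{\tilde R,Z,\mu}+w_\eps(t,\log X_t^{\mu})$, using the PDE \eqref{pde:w-eps} together with the bound \eqref{HmoinsHeps} to get the upper estimate $V_R^P\le\tilde R+w_\eps(0,\log X_0)+T\eps$ for arbitrary $Z\in\mathcal Z$, and then the matching lower estimate for the specific choice $Z=Z^\eps$ realising the supremum in $\mathcal H$. Your additional checks (admissibility of $\xi_\eps$ via boundedness of $Z^\eps$, and the explicit flagging of the maximiser identification for $Z^\eps$ as the delicate point) are consistent with, and slightly more careful than, what the paper writes.
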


\begin{proof}

 We fix some control $Z\in\Zc$ and we apply It\^o's formula to the process $\big(Y^{\tilde R,Z,\mu}_t+ w_\eps(t,\log(X_t^{\mu}))\big)_{t\in[0,T]}$ 
\beqs
 Y_T^{\tilde R,Z,\mu}+w_\eps(T,\log(X_T^{\mu}))&= & \tilde R+ w_\eps(0,\log(X_0))\\
&&+\int_0^T\Big( \partial_t  w_\eps(s,\log(X^\mu_s))\\
 & &  +(\lambda-{\sigma^2\over 2}-\mu(X^\mu_s)- a^*(X^\mu_s,Z_s))\partial_{x}  w_\eps(s,\log(X^\mu_s))\\
&&+ p(X^\mu_s) a^*(X_s^\mu,Z_s) X^\mu_s-k( a^*(X^\mu_s,Z_s))-\frac{ \sigma^2}{2}\gamma |Z_s|^2\\
&&+  \frac{\sigma^2}{2} \partial_{xx}  w_\eps (s,\log(X^\mu_s)) \Big)ds\\
&&+\sigma\int_0^T\left(\partial_x    w_\eps(s,\log(X^\mu_s))+Z_s\right) dW_s^* \;,
\enqs 
where $W^*$ stands for $W^{ a^*(X^\mu,Z)}$. Since $w_\eps\in C^{2+\nu}([0,T]\times\R)$ and $Z\in\Zc$ we get 
\begin{multline*}
\E^{a^*(X^\mu,Z)}\left[Y_T^{\tilde R,Z,\mu}+ w_\eps(T,\log(X_T^{\mu}))\right] ~\leq ~   \tilde R+ w_\eps(0,\log(X_0)) \\
+\int_0^T\E^{a^*(X^\mu,Z)}\Big[\Big(\partial_t w_\eps+\Hc\Big(.,\partial_{y} w_\eps, \partial_{yy} w_\eps\Big)\Big)(s,\log(X^\mu_s))\Big]ds\;.
\end{multline*}
From \eqref{HmoinsHeps} we get
\begin{multline*}
\E^{a^*(X^\mu,Z)}\left[Y_T^{\tilde R,Z,\mu}+ w_\eps(T,\log(X_T^{\mu}))\right] ~\leq ~   \tilde R+ w_\eps(0,\log(X_0)) +T\eps\\
+\int_0^T\E^{a^*(X^\mu,Z)}\Big[\Big(\partial_t w_\eps+\Hc_\eps\Big(.,\partial_{y} w_\eps, \partial_{yy} w_\eps\Big)\Big)(s,\log(X^\mu_s))\Big]ds\;,
\end{multline*}
and since $w_\eps$ is solution to 
\eqref{pde:w-eps}, we get 
\beqs
\E^{a^*(X^\mu,Z)}\left[Y_T^{\tilde R,Z,\mu}-f(X_T^{\mu})\right] &\leq & \tilde R+ w_\eps(0,\log(X_0))+T\eps\;.
\enqs

Since $Z$ is arbitrarily chosen in $\Zc$ we get
\beq\label{dominationVRP}
V_R^P &\leq&  \tilde R+ w_\eps(0,\log(X_0))+T\eps\;.
\enq
We now take 
 $Z=Z^\eps$ where $Z^\eps$ is given by \eqref{eqZstar}. 
We now notice that $Z^\eps\in \mathcal Z$ since $Z^\eps$ is bounded, and by definition of $Z^\eps$ we have
\beqs
&& \Hc\Big(\log(X^\mu),\partial_{y} w_\eps(.,\log(X^\mu)), \partial_{yy} w_\eps(.,\log(X^\mu))\Big) \\
&= & X^\mu p(X^\mu )a^*(X^\mu ,Z^\eps)-{a^*(X^\mu ,Z^\eps)^2\over 2}-\frac{ \sigma^2}{2}\gamma \big|Z^\eps\big|^2 +\frac{\sigma^2}{2}\partial_{yy} w_\eps(.,\log(X^\mu))  \\
&&+ \quad  \big(\lambda-{\sigma^2\over 2}- \mu(X^\mu )-a^*(X^\mu ,Z^\eps)\big)\partial_y w_\eps(.,\log(X^\mu))  
\enqs
for any $[0,T]$. A straightforward application of It\^o's formula  and Girsanov Theorem give
\beqs
\mathbb E^{ a^*(X^\mu,Z^\eps)}[Y_T^{\tilde R,Z^\eps,\mu}-f(X_T^{\mu})] &=&  \tilde R+ w_\eps(0,\log(X_0))\\
 & &+ \int_0^T\E^{ a^*(X^\mu,Z^\eps)}\Big[\Big(\partial_t w_\eps+\Hc\Big(.,\partial_{y} w_\eps, \partial_{yy} w_\eps\Big)\Big)(s,\log(X^\mu_s))\Big]ds \;.
 \enqs
 From Propositions \eqref{pde:w-eps} and \eqref {HmoinsHeps} we get
\beqs
\mathbb E^{ a^*(X^\mu,Z^\eps)}[Y_T^{\tilde R,Z^\eps,\mu}-f(X_T^{\mu})]   & \geq & \tilde R+ w_\eps(0,\log(X_0))-T\eps\;.
\enqs
Hence, we get from \eqref{dominationVRP}
\beqs
 V_R^P & \leq & \mathbb E^{ a^*(X^\mu,Z^\eps)}[Y_T^{\tilde R,Z^\eps,\mu}-f(X_T^{\mu})]+2T\eps \;.
 \enqs
Therefore, we get $\xi_\eps= Y_T^{\tilde R,Z^\eps,\mu}$ is a $2T\eps$-optimal policy for the regulator.

\end{proof}

\subsection{Extension to the logistic equation and continuous cost}

 We consider in this section an approximation method to build a sequence of almost optimal taxes in the case the classical logistic dynamic for SDE \eqref{sdeprimal}, i.e. $\mu(x)=x$. More precisely, we introduce a sequence of approximated models from which we derive almost optimal strategy from the previous section. We show that this sequence remains almost optimal for the logistic model. We also weaken the assumption \textbf{(H')} $(ii)$ as follows. 

\vspace{1mm}

\noindent \textbf{(H$_f$')} The function $f$ is bounded and continuous on $\mathbb R$.\\

We introduce the sequence of mollifiers $\rho_n:~\R\rightarrow\R$, $n\geq 1$, defined by
 \beqs
 \rho_n(x) & := & {n\rho(nx)\over \int_\R\rho(u)du}\;, \qquad x\in\R\;,
 \enqs
 where the function $\rho:~\R\rightarrow\R$ is defined by
\begin{equation*}
\rho (x)  := \exp\big({-1\over 1-|x|^2}\big) \1_{|x] < 1}\;.
\end{equation*}
We then define the functions $f_n$, $n\geq 1$, by
\beqs
f_n(x) &: = & \int_\R f(y)\rho_n(x-y)dy\;,\quad x\in\R\;.
\enqs
From classical results, we know that $f_n$ satisfies $\textbf{(H')}(ii)$ for all $n\geq 1$ and $f_n$ converges to $f$ as $n$ goes to infinity uniformly on every compact subset of $\R$.

\noindent We also define the functions $\mu_n:~ \R\rightarrow\R$, $n\geq 1$, by
\beq\label{mundef}
\mu_n(x) & := & x\big( \Theta(x+e^n+1)-\Theta(x-(e^n+1)) \big)\;,\quad x\in\R\;,
\enq
where the function $\Theta:\R\rightarrow\R$ is given by 
\beq\label{defTheta}
\Theta(u) & := & {\int_{-\infty}^u\rho(r)dr\over \int_{\R}\rho(r)dr}\;,\quad u\in\R\;. 
\enq
We then notice that $\mu_n$ satisisfes \textbf{(H')} $(i)$ and
\beqs
\mu_n(x) & = & x\;,\quad x\in [-e^n,e^n]\;,
\enqs 
for $n\geq 1$.
We first have the following preliminary result on the convergence  of $X^{\mu_n}$ to $X$.
\begin{Lemma}\label{Lem-speed-cvXnX}
There exists a constant $C$ such that
\beqs
\sup_{t\in[0,T]}\E\Big[\big|X_t^{\mu_n}-X_t^{}\big|^2\Big] & \leq & C \exp\Big( -{n^2\over4\sigma^2} \Big)  \;,
\enqs
for all $n\geq 1$.
\end{Lemma}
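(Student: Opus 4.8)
The plan is to estimate $X_t^{\mu_n} - X_t$ by writing an SDE for the difference and controlling it with standard moment estimates, exploiting the fact that $\mu_n(x) = x$ on $[-e^n, e^n]$ so the two dynamics agree until $X$ (or $X^{\mu_n}$) leaves this interval. First I would recall that, since $X_0 > 0$ and both diffusions preserve positivity (they are of the form $dZ_t = Z_t(\cdots)dt + \sigma Z_t dW_t$), the discrepancy between $\mu_n$ and the identity is only felt when the process exceeds the threshold $e^n$. Writing $\Delta_t := X_t^{\mu_n} - X_t$, one has
\[
\Delta_t = \int_0^t \big( X_s^{\mu_n}(\lambda - \mu_n(X_s^{\mu_n})) - X_s(\lambda - X_s) \big) ds + \int_0^t \sigma \Delta_s dW_s\;.
\]
The drift term splits into a locally Lipschitz part (coming from $X_s^{\mu_n}(\lambda - X_s^{\mu_n}) - X_s(\lambda - X_s)$, which is Lipschitz on bounded sets and quadratic in general) plus a correction term $X_s^{\mu_n}\big(X_s^{\mu_n} - \mu_n(X_s^{\mu_n})\big)$ supported on $\{X_s^{\mu_n} > e^n\}$.

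The key step is then a tail estimate: I would show $\mathbb P(\sup_{t\in[0,T]} X_t > e^n)$ — or more precisely an $L^2$-type quantity like $\mathbb E[\sup_t (X_t)^2 \1_{\sup_t X_t > e^n}]$ and its analogue for $X^{\mu_n}$ — decays like $\exp(-n^2/(4\sigma^2))$. This is where the explicit solution formula for the logistic SDE is invaluable: since
\[
X_t = \frac{X_0 e^{(\lambda - \sigma^2/2)t + \sigma W_t}}{1 + X_0 \int_0^t e^{(\lambda - \sigma^2/2)s + \sigma W_s} ds} \leq X_0 e^{(\lambda - \sigma^2/2)t + \sigma W_t}\;,
\]
one has $\sup_{t\in[0,T]} X_t \leq X_0 e^{\lambda T} \sup_{t\in[0,T]} e^{\sigma W_t - \sigma^2 t /2}\cdot e^{\sigma^2 T/2}$, and the supremum of the exponential martingale has Gaussian-type tails via the reflection principle: $\mathbb P(\sup_{t\leq T}(\sigma W_t) > a) \leq 2\mathbb P(\sigma W_T > a) \leq 2e^{-a^2/(2\sigma^2 T)}$. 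Choosing $a \sim n$ (so that $e^a \sim e^n$) produces the bound $e^{-cn^2}$; tracking constants carefully — in particular the time horizon $T$ being absorbed into the constant $C$ while the exponential rate is $n^2/(4\sigma^2)$ — gives the stated form. The same reasoning applies to $X^{\mu_n}$ after noting $\mu_n \geq 0$, so its drift is bounded above by $\lambda X_s^{\mu_n}$, giving the same exponential upper envelope.

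The main obstacle — or at least the most delicate point — is combining the pathwise/Gronwall argument for the Lipschitz part with the tail control of the non-Lipschitz correction in a way that does not lose the sharp exponential rate. Concretely, applying It\^o to $|\Delta_t|^2$ and taking expectations yields
\[
\mathbb E[|\Delta_t|^2] \leq C\int_0^t \mathbb E[|\Delta_s|^2] ds + C\int_0^t \mathbb E\big[ |X_s^{\mu_n}|^2 |X_s^{\mu_n} - \mu_n(X_s^{\mu_n})|^2 \big] ds + (\text{cross terms})\;,
\]
but the quadratic nature of the logistic drift means the "Lipschitz constant" is really $\sup_s(X_s^{\mu_n} \vee X_s)$, which is unbounded; I would handle this either by a localization/stopping-time argument at the level $e^n$ (on $\{$stopping time $= T\}$ the drifts coincide and $\Delta \equiv 0$, while $\mathbb P(\text{stopping time} < T)$ carries the $e^{-cn^2}$ factor) or by using that $X_s^{\mu_n} - \mu_n(X_s^{\mu_n})$ is itself supported on a rare event and bounding $\mathbb E[\,\cdot\,\1_{\text{rare}}]$ by Cauchy–Schwarz against high moments of $X^{\mu_n}$, which are finite uniformly in $n$ (again from the exponential envelope). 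Grönwall's lemma then closes the estimate with a constant $Ce^{CT}$, absorbed into $C$, and the rate $n^2/(4\sigma^2)$ surviving from the Gaussian tail. The uniformity in $t\in[0,T]$ follows since every bound above is already uniform in $t$.
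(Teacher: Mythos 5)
Your proposal is correct and, once you discard the Gr\"onwall framing in favor of your own ``localization at level $e^n$'' resolution of the delicate point, it is essentially the paper's proof: by strong uniqueness the stopped processes $X_{\cdot\wedge\tau_n}$ and $X^{\mu_n}_{\cdot\wedge\tau_n}$ coincide exactly, so the difference is supported on $\{\tau_n\leq T\}$, and Cauchy--Schwarz against fourth moments that are bounded uniformly in $n$ (via domination by a geometric Brownian motion, exactly your ``exponential envelope'') combined with the reflection-principle tail $\P(\tau_n\leq T)\leq C\exp(-n^2/(2\sigma^2))$ produces the rate $n^2/(4\sigma^2)$ after the square root. Note that the It\^o/Gr\"onwall step on $|\Delta_t|^2$ is then superfluous --- since the coincidence before $\tau_n$ is exact rather than approximate, there is no residual ``Lipschitz part'' left to propagate.
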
 
\begin{proof}
We define the sequence of stopping times $(\tau_n)_{n\geq1}$ by 
\beq\label{stopping-time}
\tau_{n} &:= & \inf \{t\in[0,T],\; X_t\geq e^n\}\;,\quad n\geq 1 \;.
\enq
We then notice that 
\beqs
X_{t\wedge \tau_n}&=&x+\int_0^{t\wedge \tau_n} X_{s}(\lambda-X_{s})ds+\int_0^{t\wedge \tau_n} \sigma X_{s}dW_s\\
&=& x+\int_0^{t} \mathbf 1_{s\leq \tau_n}X_{s}(\lambda -\mu_n(X_{s}))ds+\int_0^{t} \sigma \mathbf 1_{s\leq \tau_n}X_{s} dW_s \;,
\enqs
for all $t\in[0,T]$.
Therefore $(X_{t\wedge \tau_n})_{t\in [0,T]}$ and  $(X_{t\wedge \tau_n}^{\mu_n})_{t\in [0,T]}$ satisfy the same SDE with random and Lipschitz coefficients. By strong uniqueness,  we have
\beqs\label{egalitearret}
X_{t\wedge \tau_n} &=& X_{t\wedge \tau_n}^{\mu_n}\;,\quad   t\in [0,T] \;. 
 \enqs
Which implies
\[
 \sup_{t\in[0,T]}\E\Big[\big|X_t^{\mu_n}-X_t^{}\big|^2\Big] = \sup_{t\in[0,T]}\E\Big[\big|X_t^{\mu_n}-X_t^{}\big|^2\mathds{1}_{\tau_n\leq t}\Big]\;. 
 \]
Hence, by using Cauchy-Schwarz Inequality, we have
\beq \label{Markov}
\sup_{t\in[0,T]}\E\Big[\big|X_t^{\mu_n}-X_t^{}\big|^2\Big] & \leq & \sup_{t\in[0,T]}\E\Big[\big|X_t^{\mu_n}-X_t^{}\big|^4\Big]^{1\over 2}\P\big({\tau_n\leq T}\big)^{1\over 2} \;,
 \enq
 for all $n\geq 1$.
 
 We then notice that 
 \begin{equation}\label{nondecXn}
 X^{\mu_0}  \geq  X^{\mu_n} \geq X > 0
 \end{equation}
 for all $n\geq 1$. Indeed, by setting $\delta^n:=X^{\mu_n}-X^{}$,  we have
\[
\delta_t^n  =  \int_0^t b_s \delta_s^n ds+\int_0^t \sigma \delta_s^n d W_s+\int_0^tc_sds
\]
 where 
 \begin{equation*}
 b  := 
\left\{
\begin{array}{ccc}
\frac{ X^{\mu_n}(\lambda-\mu_n(X^{\mu_n})) -X(\lambda-\mu_n(X))}{\delta_n} & \mbox{ if } & X^{\mu_n}-X^{}\neq 0 \;,\\
 0 & \mbox{ if } & X^{\mu_n}-X = 0 \;,
 \end{array} 
 \right.
 \end{equation*}
is a bounded process since $x\mapsto x(\lambda-\mu_n(x))$ is Lipschitz continuous, and 
 \beqs
 c & = & X(\lambda-\mu_n(X)) -X(\lambda-X)
 \enqs
 is a nonnegative process since $\mu_n(x) \leq x$ for $x\in[0,+\infty)$. A straightforward computation shows that 
 \beqs
 \delta^n_t & = & R_t\int_0^t{c_s\over R_s}ds\;,\quad t\in[0,T]\;,
 \enqs
 where $R_t=\exp(\int_0^t(b_s-\sigma^2/2)ds+\sigma W_t)$ for $t\in[0,T]$. Since $c\geq 0$ we get $X^{\mu_n}\geq X$. The same argument applied to $\tilde \delta^n=X^{\mu_n}-X^{\mu_{n+1}}$ gives $X^{\mu_n}\geq X^{\mu_{n+1}}$.
 
 From \eqref{Markov} and \eqref{nondecXn}, there exists a constant $C$ such that 
 \beq\label{estim-intXnX}
 \sup_{t\in[0,T]}\E\Big[\big|X_t^{\mu_n}-X_t^{}\big|^2\Big] &\leq & C \P\big({\tau_n\leq T}\big)^{1\over 2}\;,\quad n\geq 1\;.
 \enq
 Still using $X^{\mu_0}\geq X>0$  and since $X^{\mu_0}$ is a geometric drifted brownian motion, we have
 \beqs
 \P\big({\tau_n\leq T}\big) & \leq & \P\big(\sup_{t\in[0,T]}W_t\geq {n-(\lambda-\sigma^2/2)T\over \sigma}\Big)  \;.
 \enqs
 Since $\sup_{t\in[0,T]}W_t$ has the same law as $|W_T|$, we get
 \begin{equation}\label{estimtaunT}
 \P({\tau_n\leq T}\big)  \leq  2\int_{{n-(\lambda-\sigma^2/2)T\over \sigma}}^{+\infty} e^{-y^2\over 2}{dy\over \sqrt{2\pi}}
  \leq  C\exp\big(-{n^2\over 2\sigma^2}\big) \;,\quad n\geq 1\;,
 \end{equation}
 and we get the result from \eqref{estim-intXnX}.
\end{proof}

We then define the function $V^P_{R,n}$ as the optimal value of the regulator problem in the model with coefficients $\mu_n$ and $f_n$ in place of $\mu$ and $f$ respectively
\beqs
V^P_{R,n} & = & \sup_{\xi \in \Cc^{\mu_n}_R} \mathbb E^{a^*(X^{\mu_n},Z)}[\xi-f_n(X_T^{\mu_n})] \;. 
\enqs
Since $\mu_n$ and $f_n$ satisfy Assumptions \textbf{(H')} $(i)$ and \textbf{(H')} $(ii)$ respectively for all $n\geq1$, we get, from Theorem \ref{thm:regulator}, a sequence of bounded processes $(Z^{\eps,n})_{n\geq1}$ such that
\beqs
V^P_{R,n} & \leq  & \mathbb E^{a^*(X^{\mu_n},Z^{\eps,n})}[Y^{\tilde R,Z^{\eps,n},\mu_n}_T-f_n(X_T^{\mu_n})]+2T\eps\;,\quad n\geq 1\;.
\enqs
We introduce the control $\tilde Z^{\eps,n}$ defined by
\beqs
\tilde Z^{\eps,n}_t & := & Z^{\eps,n}_t\mathds{1}_{[0,\tau_n]}(t)\;,\quad t\in[0,T]\;,
\enqs
where  the stopping time $\tau_n$  is defined in \eqref{stopping-time}, and we denote by $\tilde \xi_{\eps,n}$ the related contract
\[
\tilde \xi_{\eps,n}  =  Y_T^{R,\tilde Z^{\eps,n}}\;.
\]

We then have the following almost optimality result.
\begin{Theorem}\label{thm:logistic}
Suppose that  \textbf{(H$_f$')} and \textbf{(H')} $(iii)$ hold.  Then $V_R^P<+\infty$ and we have

\[
\limsup_{n\rightarrow+\infty} \Big(V^P_{R}- \mathbb E^{\mathbb P^{ a^*(X^{},\tilde Z^{\eps,n})}}\big[\tilde \xi_{\eps,n}-f(X_T^{})\big] \Big) \leq  2T\eps
\]
for any $\eps>0$.

\end{Theorem}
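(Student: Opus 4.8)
The plan is to carry the conclusion of Theorem \ref{thm:regulator} in the $n$-th model — recorded just above, namely $V^P_{R,n}\le\E^{a^*(X^{\mu_n},Z^{\eps,n})}[Y^{\tilde R,Z^{\eps,n},\mu_n}_T-f_n(X^{\mu_n}_T)]+2T\eps$ — over to the logistic model, exploiting that $X^{\mu_n}$ and the logistic process $X$ coincide up to the stopping time $\tau_n$ of \eqref{stopping-time} together with $\P(\tau_n\le T)\le Ce^{-n^2/(2\sigma^2)}$ from \eqref{estimtaunT}. First I would check $V^P_R<+\infty$: for $\xi=Y^{\tilde R,Z}_T\in\mathcal C_R$, the Jensen step already used in the proof of Theorem \ref{thm:agent} gives $\tilde R\ge Y_0\ge\E^{a^*(X,Z)}[\xi]-TPM$ with $M:=\underline M\vee\overline M$ and $P$ the constant of \textbf{(H$_p$)} (which holds here, since \textbf{(H')}~$(iii)$ forces $x\mapsto p(x)x$ to be bounded); since $f$ is bounded, $\E^{a^*(X,Z)}[\xi-f(X_T)]\le\tilde R+TPM+\|f\|_\infty$, so $V^P_R<+\infty$, and the same bound holds for each $V^P_{R,n}$ because $\|f_n\|_\infty\le\|f\|_\infty$.

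\textbf{Comparison of realised payoffs.} By strong uniqueness $X_{\cdot\wedge\tau_n}=X^{\mu_n}_{\cdot\wedge\tau_n}$, and by construction $\tilde Z^{\eps,n}=Z^{\eps,n}\mathbf 1_{[0,\tau_n]}$; hence on $\{\tau_n>T\}$ the controls $a^*(X,\tilde Z^{\eps,n})$ and $a^*(X^{\mu_n},Z^{\eps,n})$, the associated Girsanov densities, and the variables $\tilde\xi_{\eps,n}=Y^{\tilde R,\tilde Z^{\eps,n}}_T$ and $Y^{\tilde R,Z^{\eps,n},\mu_n}_T$ all coincide $\P$-a.s. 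Writing $\Delta_n$ for $\E^{a^*(X,\tilde Z^{\eps,n})}[\tilde\xi_{\eps,n}-f(X_T)]-\E^{a^*(X^{\mu_n},Z^{\eps,n})}[Y^{\tilde R,Z^{\eps,n},\mu_n}_T-f_n(X^{\mu_n}_T)]$, the contribution of $\{\tau_n>T\}$ collapses to $\E^{a^*(X,\tilde Z^{\eps,n})}[(f_n-f)(X_T)\mathbf 1_{\tau_n>T}]$, bounded in absolute value by $\|H^{a^*(X,\tilde Z^{\eps,n})}_T\|_{L^2(\P)}\,\E^\P[|f_n-f|^2(X_T)]^{1/2}$; the first factor is bounded uniformly in $n$ (as $|a^*|\le M$) and the second tends to $0$ by dominated convergence ($f_n\to f$ pointwise, $|f_n-f|\le2\|f\|_\infty$). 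On $\{\tau_n\le T\}$ I would use Cauchy--Schwarz and $\P^{a^*}(\tau_n\le T)\le C\,\P(\tau_n\le T)^{1/2}\le Ce^{-n^2/(4\sigma^2)}$ (Girsanov with $|a^*|\le M$): directly from their dynamics, $\E^{a^*}[|\tilde\xi_{\eps,n}|^2]^{1/2}$ and $\E^{a^*}[|Y^{\tilde R,Z^{\eps,n},\mu_n}_T|^2]^{1/2}$ are each $\le C(1+\|Z^{\eps,n}\|_\infty^2)$, which grows at most exponentially in $n$ by the $C^{2+\nu}$-estimate underlying Proposition \ref{Approx-Hamiltonian} applied with $\mu_n$, and this is killed by $e^{-n^2/(8\sigma^2)}$. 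Hence $\Delta_n\to0$.

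\textbf{Stability of the value.} It remains to show $\liminf_n V^P_{R,n}\ge V^P_R$. Fix $\eta>0$ and $\xi=Y^{\tilde R,Z}_T\in\mathcal C_R$ with $\E^{a^*(X,Z)}[\xi-f(X_T)]\ge V^P_R-\eta$, and set $Z^{(n)}:=Z\mathbf 1_{[0,\tau_n]}$. Since $X_{\cdot\wedge\tau_n}=X^{\mu_n}_{\cdot\wedge\tau_n}$, the process $Y^{\tilde R,Z^{(n)},\mu_n}$ equals $Y^{\tilde R,Z}$ on $[0,\tau_n]$ and thereafter evolves with bounded drift and no diffusion, so $|Y^{\tilde R,Z^{(n)},\mu_n}_T|\le\sup_{t\le T}|Y^{\tilde R,Z}_t|+C$; the exponential-moment estimate for $\sup_t|Y^{\tilde R,Z}_t|$ established in the proof of Theorem \ref{thm:agent} then gives $\xi^{(n)}:=Y^{\tilde R,Z^{(n)},\mu_n}_T\in\mathcal C^{\mu_n}_R$. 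On $\{\tau_n>T\}$ one has $\xi^{(n)}=\xi$, $X^{\mu_n}_T=X_T$ and the Girsanov density equals the ($n$-independent) density $H^{a^*(X,Z)}_T$, so $\E^{a^*(X^{\mu_n},Z^{(n)})}[(\xi^{(n)}-f_n(X^{\mu_n}_T))\mathbf 1_{\tau_n>T}]=\E^\P[H^{a^*(X,Z)}_T(\xi-f_n(X_T))\mathbf 1_{\tau_n>T}]\to\E^{a^*(X,Z)}[\xi-f(X_T)]$ by dominated convergence (dominating function $H^{a^*(X,Z)}_T(|\xi|+\|f\|_\infty)\in L^1(\P)$, using $\xi\in L^1(\P^{a^*(X,Z)})$). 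On $\{\tau_n\le T\}$, Cauchy--Schwarz bounds the remainder by $\E^{a^*(X^{\mu_n},Z^{(n)})}[(\sup_t|Y^{\tilde R,Z}_t|+C+\|f\|_\infty)^2]^{1/2}\,\P^{a^*(X^{\mu_n},Z^{(n)})}(\tau_n\le T)^{1/2}$, whose first factor is bounded uniformly in $n$ (Hölder, since $\sup_t|Y^{\tilde R,Z}_t|$ has all $\P$-moments and $|a^*|\le M$) and whose second factor is $\le Ce^{-n^2/(8\sigma^2)}$. Hence $\E^{a^*(X^{\mu_n},Z^{(n)})}[\xi^{(n)}-f_n(X^{\mu_n}_T)]\to\E^{a^*(X,Z)}[\xi-f(X_T)]\ge V^P_R-\eta$, so $\liminf_n V^P_{R,n}\ge V^P_R-\eta$, and letting $\eta\downarrow0$ gives $\liminf_n V^P_{R,n}\ge V^P_R$.

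\textbf{Conclusion and main difficulty.} From the recorded estimate and the definition of $\Delta_n$, $V^P_{R,n}-2T\eps\le\E^{a^*(X,\tilde Z^{\eps,n})}[\tilde\xi_{\eps,n}-f(X_T)]-\Delta_n$, so
\[
V^P_R-\E^{a^*(X,\tilde Z^{\eps,n})}\big[\tilde\xi_{\eps,n}-f(X_T)\big]\le\big(V^P_R-V^P_{R,n}\big)+2T\eps-\Delta_n ,
\]
and taking $\limsup_n$ with $\Delta_n\to0$ and $\limsup_n(V^P_R-V^P_{R,n})\le0$ yields the claim. I expect the main obstacle to be the two $\{\tau_n\le T\}$ estimates: they require controlling, even crudely, the growth in $n$ of the quantities that are only finite (not uniform) in $n$ — namely $\|Z^{\eps,n}\|_\infty$, which must be extracted from the gradient bounds behind Proposition \ref{Approx-Hamiltonian} applied with $\mu_n$, and the exponential moments of the $Y$-processes, which come from the proof of Theorem \ref{thm:agent} — and then checking that this growth is dominated by the Gaussian tail $\P(\tau_n\le T)\le Ce^{-n^2/(2\sigma^2)}$.
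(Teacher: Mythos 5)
Your overall architecture matches the paper's: pass to the truncated model $(\mu_n,f_n)$, invoke Theorem \ref{thm:regulator} there, and transfer back to the logistic model by exploiting $X_{\cdot\wedge\tau_n}=X^{\mu_n}_{\cdot\wedge\tau_n}$ together with the Gaussian tail $\P(\tau_n\le T)\le Ce^{-n^2/(2\sigma^2)}$; your ``stability of the value'' step is the analogue of the paper's Steps 2--4 (the paper keeps the untruncated $\eta$-optimal $Z$ and proves $\Xi^\mu=\Xi^{\mu_n}$ instead of truncating it, but that difference is cosmetic). However, there is a genuine gap exactly where you flag ``the main obstacle''. In the $\{\tau_n\le T\}$ part of your comparison of realised payoffs you bound $\E^{a^*}[|\tilde\xi_{\eps,n}|^2]^{1/2}$ and $\E^{a^*}[|Y^{\tilde R,Z^{\eps,n},\mu_n}_T|^2]^{1/2}$ by $C(1+\|Z^{\eps,n}\|_\infty^2)$ and assert that $\|Z^{\eps,n}\|_\infty$ ``grows at most exponentially in $n$'' from the $C^{2+\nu}$ estimates behind Proposition \ref{Approx-Hamiltonian}. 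Nothing in the paper, nor in the Oleinik--Kruzhkov estimates it cites, delivers this: the drift coefficient of the PDE for $w_\eps$ in the $n$-th model is $\lambda-\sigma^2/2-\mu_n(e^y)$, whose sup-norm and Lipschitz constant in $y$ are of order $e^n$, so the a priori gradient bounds one extracts (Bernstein/Gronwall on the equation for $\partial_yw_\eps$) are of order $e^{Ce^n}$, which is \emph{not} dominated by $e^{-n^2/(8\sigma^2)}$. Without a genuinely uniform (or at worst subgaussian-in-$n$) bound on $\|Z^{\eps,n}\|_\infty$, your $\{\tau_n\le T\}$ estimate does not close.

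The paper's proof shows how to avoid ever needing such a bound. Since $\tilde Z^{\eps,n}=Z^{\eps,n}\1_{[0,\tau_n]}$ and the two Girsanov densities coincide up to $\tau_n$, the only unbounded contribution to the difference of the two payoffs --- the term ${\gamma\sigma^2\over 2}\int|Z|^2\,dt$ in the dynamics of $Y$ --- cancels \emph{exactly} by optional stopping (the tower property applied at $\tau_n\wedge T$, equation \eqref{estim-Z-casse-pied2}); all the remaining terms involve only $a^*$ and $p(x)x$, which are bounded uniformly in $n$ by \textbf{(H$_p$)}, so the $\{\tau_n\le T\}$ remainder is a constant times $\P(\tau_n\le T)^{1/2}$. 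You should restructure your comparison step around this cancellation rather than around moment bounds on $\tilde\xi_{\eps,n}$ and $Y^{\tilde R,Z^{\eps,n},\mu_n}_T$. The rest of your argument (finiteness of $V^P_R$, the $\{\tau_n>T\}$ collapse, the $\liminf_nV^P_{R,n}\ge V^P_R$ step, and the final assembly) is sound.
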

\begin{proof} We proceed in four steps.

\vspace{2mm}

\ni \textit{Step 1. The optimal value $V_R^P$ is finite.}

\ni From \eqref{value:reg:ximu2}  and the dynamics of $Y^{\tilde R,Z}$ we have
\beqs
V_R^P &\leq& \sup_{ Z\in \mathcal Z} \mathbb E^{a^*(X,Z)}[{\tilde R}+T\underline M P-f(X_T^{})] ~<~+\infty\;.
\enqs
Since $f$ is bounded, we get $V_R^P<+\infty$.
\vspace{2mm}

\ni \textit{Step 2. Comparison of $\Xi^\mu$ and $\Xi^{\mu'}$.}

\ni We fix two functions $\mu,\mu'$ satisfying $\textbf{(H0)}$ and we show that $\Xi^\mu=\Xi^{\mu'}$where $\Xi^\mu$ and $\Xi^{\mu'}$ are defined by \eqref{defXimu}. Let $\xi=Y_T^{Y_0,Z,\mu}\in\Xi^\mu$. Then, we have by definition
\beqs
 \mathbb E[\exp(\gamma' |Y_T^{Y_0,Z,\mu}|)]  & < & +\infty \;,
 \enqs
for some $\gamma'>2\gamma $, with
 \[
Y_T^{Y_0,Z,\mu} = Y_0-\int_0^T \big(\frac{| a^*(X^\mu_s,Z_s)|^2}2- a^*(X^\mu_s,Z_s) (p(X^{\mu}_s)X_s^{\mu}+Z_s)+\frac{ \sigma^2}{2}\gamma |Z_s|^2  \big)ds +\int_0^T \sigma Z_s dW_s \;.
\]
Since the optimal effort $a^*$ is bounded and Assumption \textbf{(H')} $(iii)$ holds, there exists a positive constant $C$ such that
\beqs
\mathbb E \big[\exp(\gamma' |Y_T^{Y_0,Z,\mu'}|) \big] &= &\mathbb E\Big[e^{\gamma' \big|Y_0-\int_0^T \big(\frac{| a^*(X^{\mu'}_s,Z_s)|^2}2- a^*(X^{\mu'}_s,Z_s) (p(X^{\mu'}_s)X_s^{\mu'}+Z_s)+\frac{ \sigma^2}{2}\gamma |Z_s|^2  \big)ds+\int_0^T \sigma Z_s dW_s\big|}\Big] \\
&&\\
&\leq &C\mathbb E\Big[e^{\gamma' \big( \big|Y_0-\int_0^T  \big(\frac{| a^*(X^{\mu}_s,Z_s)|^2}2- a^*(X^{\mu'}_s,Z_s)Z_s + \frac{\sigma^2}{2}\gamma |Z_s|^2  \big)ds+\int_0^T \sigma Z_s dW_s\big|\big)}\Big] \\
&&\\
&\leq &  C\mathbb E\big[e^{\gamma' |Y_T^{Y_0,Z,\mu}|} e^{\delta^*_T}\big] \;,
\enqs
where $\delta^*_T={\gamma' \int_0^T | a^*(X^{\mu'}_s,Z_s)- a^*(X^{\mu}_s,Z_s)| |Z_s|ds}\;.$
We then notice  that
\beqs
| a^*(X',z)-  a^*(X,Z)||z| &=& \big|(\overline M \wedge (p(x')x'+z)\vee(-\underline M)) -(\overline M\wedge (p(x)x+z)\vee(-\underline M))\big| |z| \\
 & \leq & (\overline M+\underline M)(P+\overline M+\underline M)
\enqs
for any $z\in \mathbb R$. Hence, we get
\[
\mathbb E[\exp(\gamma' |Y_T^{Y_0,Z,\mu'}|)] \leq   Ce^{\gamma'T(\overline M+\underline M)(P+\overline M+\underline M)}\mathbb E\big[e^{\gamma' |Y_T^{Y_0,Z,\mu}|}\big] \;.
\]

 Hence, we get $\xi\in \Xi^{\mu}$.  We then write $ \Xi$ for $ \Xi^{\mu}$ in the sequel.

\vspace{2mm}

\ni \textit{Step 3. Convergence of the values for a given $Z$.}

\ni We fix $Z\in\Xi$. We then have
\begin{align*}
\nonumber 
\Delta_n(Z) & :=  \big| \mathbb E^{ a^*(X^{\mu_n},Z)}[Y_T^{\tilde R,Z,\mu_n}-f_n(X_T^{\mu_n})]   -\mathbb E^{ a^*(X,Z)}[Y_T^{\tilde R,Z}-f(X_T^{})]  \big| \\
&\leq \Delta^n_1(Z)+\Delta^n_2(Z) \;, 
\end{align*}
with 
\beqs
\Delta^n_1(Z) &=& \mathbb E \Big[|H_T^{ a^*(X^{\mu_n},Z)}- H_T^{ a^*(X^{},Z)}| |Y_T^{\tilde R,Z}-f(X_T)| \Big] 
\enqs 
and 
\beqs
\Delta^n_2(Z) &=&  \mathbb E \Big[H_T^{ a^*(X^{\mu_n},Z)} |(Y_T^{\tilde R,Z} -f(X_T)) - (Y_T^{\tilde R,Z,\mu_n} - f_n(X_T^{\mu_n})) |\Big]\;.
\enqs

\noindent We now study the convergence of $\Delta_n^1$ and $\Delta_n^2$.

\vspace{2mm}

\ni \textit{Substep 3.1.  Convergence of $\Delta_1^n$.}

\noindent  By  Cauchy-Schwarz inequality we have
\beqs
\Delta_1^n(Z)&\leq  & \mathbb E[|H_T^{ a^*(X^{\mu_n},Z)}- H_T^{a^*(X^{},Z)}|^2]^{\frac12} \mathbb E[|Y_T^{\tilde R,Z}-f(X_T)|^2]^{\frac12} \;.
\enqs
From \eqref{integr:z} and \textbf{(H$_f$')}, $\mathbb E[|Y_T^{\tilde R,Z}-f(X_T)|^2]^{\frac12}$ is uniformly bounded w.r.t. $n$. The convergence of $\Delta_1^n(Z)$ remains to the convergence of 
\beqs
\tilde \Delta^n_1(Z) &=& \mathbb E[|H_T^{ a^*(X^{\mu_n},Z)}- H_T^{ a^*(X^{},Z)}|^2]\;.
\enqs
From the definition of $a^*$, for all $q\geq 1$, there exists a constant $C_q$ such that 
\beqs
\E[| a^*(X^{},Z)- a^*(X^{\mu_n},Z)|^q] & \leq & C_q \E[| a^*(X^{},Z)- a^*(X^{\mu_n},Z)|^2]\;,\quad n\geq 1\;.
\enqs
Therefore we have
\beqs
\sup_{t\in[0,T]}\E[| a^*(X^{}_t,Z_t)- a^*(X^{\mu_n}_t,Z_t)|^q] & \xrightarrow[n\rightarrow+\infty]{} & 0
\enqs
for all $q\geq 1$. From  Lemma \ref{Lem-speed-cvXnX} and Theorem 2.8.1 in \cite{Kry80} we get
$\tilde \Delta^n_1(Z)  \xrightarrow[n\rightarrow+\infty]{}0$.

\vspace{2mm}

\ni \textit{Substep 3.2.  Convergence of $\Delta_2^n$.}

\ni From Cauchy-Schwarz Inequality, there exists a positive constant $C>0$ such that
\beqs
\Delta^n_2(Z)
&\leq&  \mathbb E[|H_T^{  a^*(X^{\mu_n},Z)}|^2]^\frac12\mathbb E[|(Y_T^{\tilde R,Z}-f(X_T)) - (Y_T^{\tilde R,Z,\mu_n}-f_n(X_T^{\mu_n}))|^2]^\frac12\\
 & \leq & C\; \mathbb  E[|H_T^{  a^*(X^{\mu_n},Z)}|^2]^\frac12\Big( \mathbb E\big[\big|Y_T^{\tilde R,Z,\mu_n}- Y_T^{\tilde R,Z}\big|^2\big]^{1\over 2} +\mathbb E\big[\big|f(X_T)- f_n(X_T^{\mu_n})  \big|^2\big]^{1\over2}\Big)\;.
\enqs

\noindent First note that 
\beqs
\mathbb E[|H_T^{  a^*(X^{\mu_n},Z)}|^2]&= &\mathbb E[e^{-2\int_0^T a^*(X^{\mu_n}_s,Z_s)\sigma^{-1}d W_s -\int_0^T|a^*(X^{\mu_n}_s,Z_s)|^2\sigma^{-2}ds }]\\
&= &\mathbb E^{\mathbb Q}[e^{\int_0^T|a^*(X^{\mu_n}_s,Z_s)|^2\sigma^{-2}ds}] \;,
\enqs
with $d\mathbb Q/d\mathbb P = H_T^{2 a^*(X^{\mu_n},Z)}$.
Since $|a^*(X^{\mu_n},Z)|$ is bounded by $\underline M  \vee \overline M$, we deduce that
\[\mathbb E[|H_T^{a^*(X^{\mu_n},Z)}|^2]\leq e^{ \frac{T (\underline M\vee \overline M)^2}{\sigma^2}}.\]
We then have
\[
\mathbb E\big[\big|f(X_T)- f_n(X_T^{\mu_n})  \big|^2\big]^{1\over2}  \leq  C \; \mathbb E\big[\big|f(X_T)- f_n(X_T)  \big|^2\big]^{1\over2}+ C \; \mathbb E\big[\big|f_n(X_T)- f_n(X_T^{\mu_n})  \big|^2\big]^{1\over2} \;.
\]
Since $f$ is continuous and bounded, we get from the dominated convergence Theorem 
\[
\mathbb E\big[\big|f(X_T)- f_n(X_T)  \big|^2\big]\xrightarrow[n\rightarrow+\infty]{}  0\;.
\]
Then from the definition of $f_n$ there exists a constant $L$ such that $f_n$ is $L$-Lipchitz continuous for all $n\geq 1$. Therefore, we get from Lemma \ref{Lem-speed-cvXnX} 
\beqs
\mathbb E\big[\big|f_n(X_T)- f_n(X_T^{\mu_n})  \big|^2\big] & \xrightarrow[n\rightarrow+\infty]{} & 0\;.
\enqs
Since $a^*$ is continuous and bounded, and $Z\in\Zc$, we get from Lemma \ref{Lem-speed-cvXnX} and the definition of  $Y^{\tilde R,Z,\mu_n}$ and $Y^{\tilde R,Z}$
\beqs
\mathbb E\big[\big|Y_T^{\tilde R,Z,\mu_n}- Y_T^{\tilde R,Z}\big|^2\big] & \xrightarrow[n\rightarrow+\infty]{} & 0\;.
\enqs
Hence we get $\lim_{n\rightarrow+\infty}\Delta^n_2(Z)=0$.
\vspace{2mm}

\ni \textit{Step 4. Almost optimality of $\tilde Z^{\eps,n}$.}

\ni We fix $\eta>0$ and $Z^\eta\in \Xi$ such that
\beqs
 V^P_R & \leq & \mathbb E^{ a^*(X,Z^\eta)}[Y_T^{\tilde R,Z^\eta}-f(X_T^{})]+\eta \;.
\enqs
By definition of $Z^{\eps,n}$, we get

\beqs
V^P_{R}- \mathbb E^{\mathbb P^{ a^*(X^{\mu_n},Z^{\eps,n})}}[\xi_{\eps,n}-f_n(X_T^{\mu^n})] & \leq & \eta+4T\eps+ \Delta_n(Z^\eta)\;, \quad n\geq 1\;.
\enqs
Sending $n$ to $\infty$, we get from Step 2
\beqs
\limsup_{n\rightarrow+\infty} V^P_{R}- \mathbb E^{\mathbb P^{ a^*(X^{\mu_n},Z^{\eps,n})}}[\xi_{\eps,n}-f_n(X_T^{\mu_n})] & \leq & 4T\eps + \eta
\enqs
for any $\eta>0$. Which implies 
\beq\label{estimlast}
\limsup_{n\rightarrow+\infty} V^P_{R}- \mathbb E^{\mathbb P^{ a^*(X^{\mu_n},Z^{\eps,n})}}[\xi_{\eps,n}-f_n(X_T^{\mu_n})] & \leq & 4T\eps\;. 
\enq
Since $f$ and $a^*$ are bounded, there exists a constant $C$ such that
\begin{multline*}
|\mathbb E^{\mathbb P^{ a^*(X^{\mu_n},Z^{\eps,n})}}[f_n(X_T^{\mu_n})] -\mathbb E^{\mathbb P^{ a^*(X^{},\tilde Z^{\eps,n})}}[f(X_T^{})]| \\
 \leq C\Big(\mathbb E[|H_T^{ a^*(X^{\mu_n},Z^{\eps,n})}- H_T^{ a^*(X^{}, Z^{\eps,n})}|^2]^{1\over 2}+\P(\tau_n\leq T)+ \mathbb E\big[\big|f(X_T)- f_n(X_T^{\mu_n})  \big|^2\big]^{1\over2} \Big)\;.
\end{multline*}
We therefore get from Step 3 and  \eqref{estimtaunT}
\beq\label{estim0}
\mathbb E^{\mathbb P^{ a^*(X^{\mu_n},Z^{\eps,n})}}[f_n(X_T^{\mu_n})] -\mathbb E^{\mathbb P^{ a^*(X^{},\tilde Z^{\eps,n})}}[f(X_T^{})] & \xrightarrow[n\rightarrow+\infty]{} & 0\;.
\enq
From \textbf{(H')} $(i)$ and the definition of $a^*$ we have
\begin{multline}\label{estim-Z-casse-pied1}
\mathbb E^{\mathbb P^{ a^*(X^{\mu_n},Z^{\eps,n})}}[Y_T^{R,Z^{\eps,n}}] -\mathbb E^{\mathbb P^{ a^*(X^{},\tilde Z^{\eps,n})}}[\tilde \xi_{\eps,n}]    \\ 
 \leq \mathbb E\Big[H^{{ a^*(X^{},\tilde Z^{\eps,n})}}_T\int_0^T\Big({ a^*(X^{}_s,\tilde Z^{\eps,n}_s)^2\over 2}+ a^*(X^{}_s,\tilde Z^{\eps,n}_s)p(X^{}_s)X^{}_s\Big)ds\Big] \\
\qquad \qquad -\mathbb E\Big[H^{{ a^*(X^{\mu_n},Z^{\eps,n})}}_T\int_0^T\Big({ a^*(X^{\mu_n}_s,Z^{\eps,n}_s)^2\over 2}+ a^*(X^{\mu_n}_s,Z^{\eps,n}_s)p(X^{\mu_n}_s)X^{\mu_n}_s\Big)ds\Big] \\
+{\gamma\sigma^2\over2}\E\Big[H_T^{{ a^*(X^{},\tilde Z^{\eps,n})}}\int_0^{T}|\tilde Z^{\eps,n}_s|^2ds-H_T^{{ a^*(X^{\mu_n},Z^{\eps,n})}}\int_0^{\tau_n}| Z^{\eps,n}_s|^2ds\Big]\;. 
\end{multline}
By definition of $\tau_n$ we have
\beq\label{estim-Z-casse-pied2}
\E\Big[H_T^{{ a^*(X^{},\tilde Z^{\eps,n})}}\int_0^{T}|\tilde Z^{\eps,n}_s|^2ds-H_T^{{ a^*(X^{\mu_n},Z^{\eps,n})}}\int_0^{\tau_n}| Z^{\eps,n}_s|^2ds\Big] & = & 0\;,
\enq
and
\beq \label{estim-Z-casse-pied3}
\mathbb E\Big[H^{{ a^*(X^{},\tilde Z^{\eps,n})}}_T\int_0^{T\wedge\tau_n}\Big({ a^*(X^{}_s,\tilde Z^{\eps,n}_s)^2\over 2}+ a^*(X^{}_s,\tilde Z^{\eps,n}_s)p(X^{}_s)X^{}_s\Big)ds\Big] &  &\\ \nonumber
-\mathbb E\Big[H^{{ a^*(X^{\mu_n},Z^{\eps,n})}}_T\int_0^{T\wedge \tau_n}\Big({ a^*(X^{\mu_n}_s,Z^{\eps,n}_s)^2\over 2}+ a^*(X^{\mu_n}_s,Z^{\eps,n}_s)p(X^{\mu_n}_s)X^{\mu_n}_s\Big)ds \Big] &  =& 0\;.\quad 
\enq
Since $a^*$ and $x\mapsto p(x)x$ are bounded, we get from  \eqref{estimtaunT}

\beq\label{estim-Z-casse-pied4}
\mathbb E\Big[H^{{ a^*(X^{},\tilde Z^{\eps,n})}}_T\int_{T\wedge\tau_n}^T\Big({ a^*(X^{}_s,\tilde Z^{\eps,n}_s)^2\over 2}+ a^*(X^{}_s,\tilde Z^{\eps,n}_s)p(X^{}_s)X^{}_s\Big)ds\Big] &  &\\
-\mathbb E\Big[H^{{ a^*(X^{\mu_n},Z^{\eps,n})}}_T\int_{T\wedge \tau_n}^T\Big({ a^*(X^{\mu_n}_s,Z^{\eps,n}_s)^2\over 2}+ a^*(X^{\mu_n}_s,Z^{\eps,n}_s)p(X^{\mu_n}_s)X^{\mu_n}_s\Big)ds \Big] &  \xrightarrow[n\rightarrow+\infty]{} & 0\;.
\qquad  \nonumber
\enq
Therefore we get from \eqref{estim0}, \eqref{estim-Z-casse-pied1}, \eqref{estim-Z-casse-pied2}, \eqref{estim-Z-casse-pied3}  and  \eqref{estim-Z-casse-pied4} 
\beqs
\limsup_{n\rightarrow+\infty} \mathbb E^{\mathbb P^{ a^*(X^{\mu_n},Z^{\eps,n})}}[Y_T^{R,Z^{\eps,n}}-f_n(X_T^{\mu_n})] -\mathbb E^{\mathbb P^{ a^*(X^{},\tilde Z^{\eps,n})}}[\tilde \xi_{\eps,n}-f(X_T^{})]  & \leq & 0\;.
\enqs
This last inequality with \eqref{estimlast} give the result.

\end{proof}

\subsection{Applications and economical interpretations}

\subsubsection{Non-regulated case and reservation utility}\label{application:reservation}
In this part, we provide a way to monitor the activities of the natural resource manager without penalizing him by choosing a relevant reservation utility $R$. The natural way is to consider the problem of the regulator without regulation policy
$$
\overline V_A  :=  \sup_{\alpha \in \Ac} \E^{\alpha} \Big[ - \exp \Big( -\gamma \big( \int_0^T p(X_s^{\mu})\alpha_sX^\mu_sds - \int_0^T \frac{|\alpha_s|^2}2 ds \big) \Big) \Big]\;.
$$

\noindent This problem can be solved explicitly by direct computations. If the regulator chooses $R=\overline V_A$ then any admissible tax $\xi$ will satisfy $V_A(\xi)\geq \overline V_A$. In other words, the choice of $R$ ensures a non-punitive regulation policy. 

\subsubsection{Numerical examples}

We now give some numerical results to illustrate our theoretical results. For that we consider $\mu_n$ given by \eqref{mundef}, $p(x)=px^{-1}$ with $p>0$ and $f(x)= (c-\frac{c}{\beta}x) \mathbf 1_{x<\beta}$
where $c$ is the cost of the resource for the regulator and $\beta$ is the target size of the population.

We use the following parameters:  
$\gamma=0.1$, $\lambda=1.2$, $\sigma=0.1$, $P=1$, $T=1$, $\beta=0.9$, $c=3$, $\underline M = \overline M = 10$, $n=100$, $\varepsilon = 0.01$ with $X_0= 1.2$. We use an approximation grid of $2000$ points for the space and $5000$ points for the time. In our case $R=\overline V_A = - \exp(- \gamma P^2 T)$.

\begin{figure}[h]
    \begin{minipage}[c]{.46\linewidth}
        \centering 
\includegraphics[width=8.5cm]{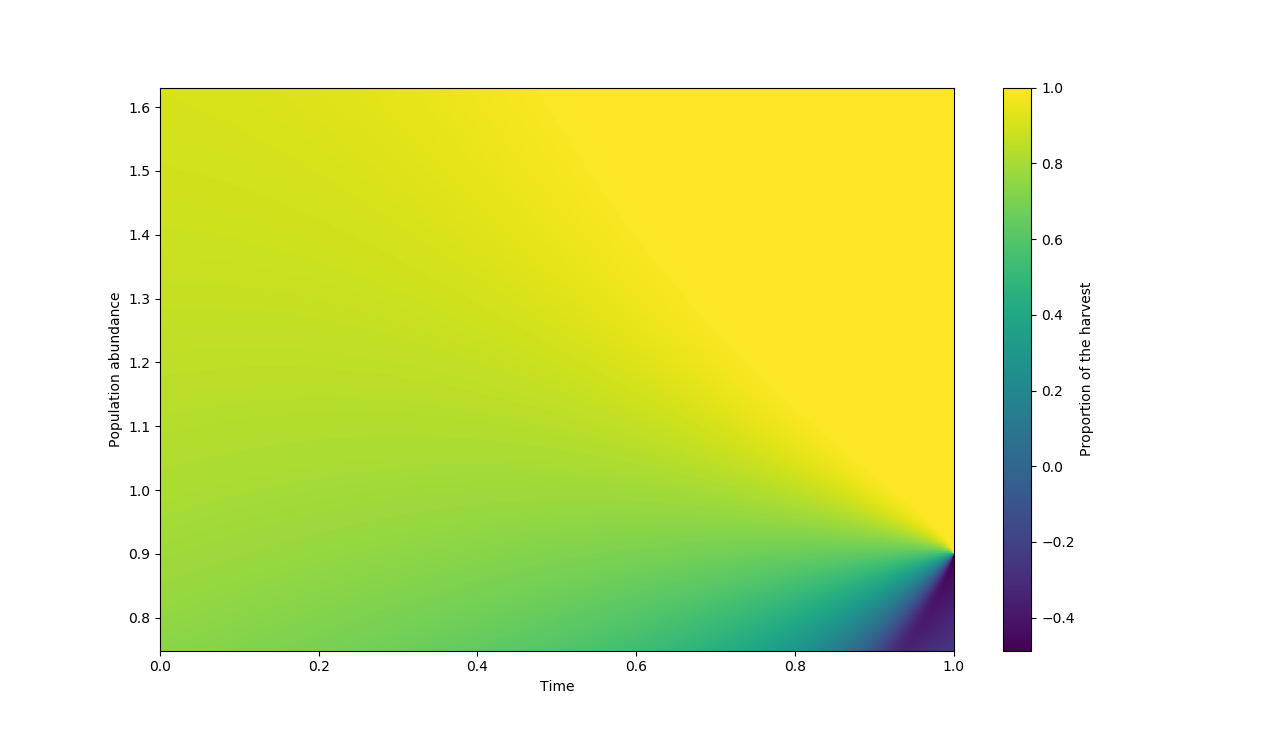}
\caption{\label{couleur 1} The optimal harvest rate w.r.t.  the time $t$ and the population abundance $X_t$.}
    \end{minipage}
    \hfill%
    \begin{minipage}[c]{.46\linewidth}
        \centering \vspace{-0.4cm}
\includegraphics[width=8.5cm]{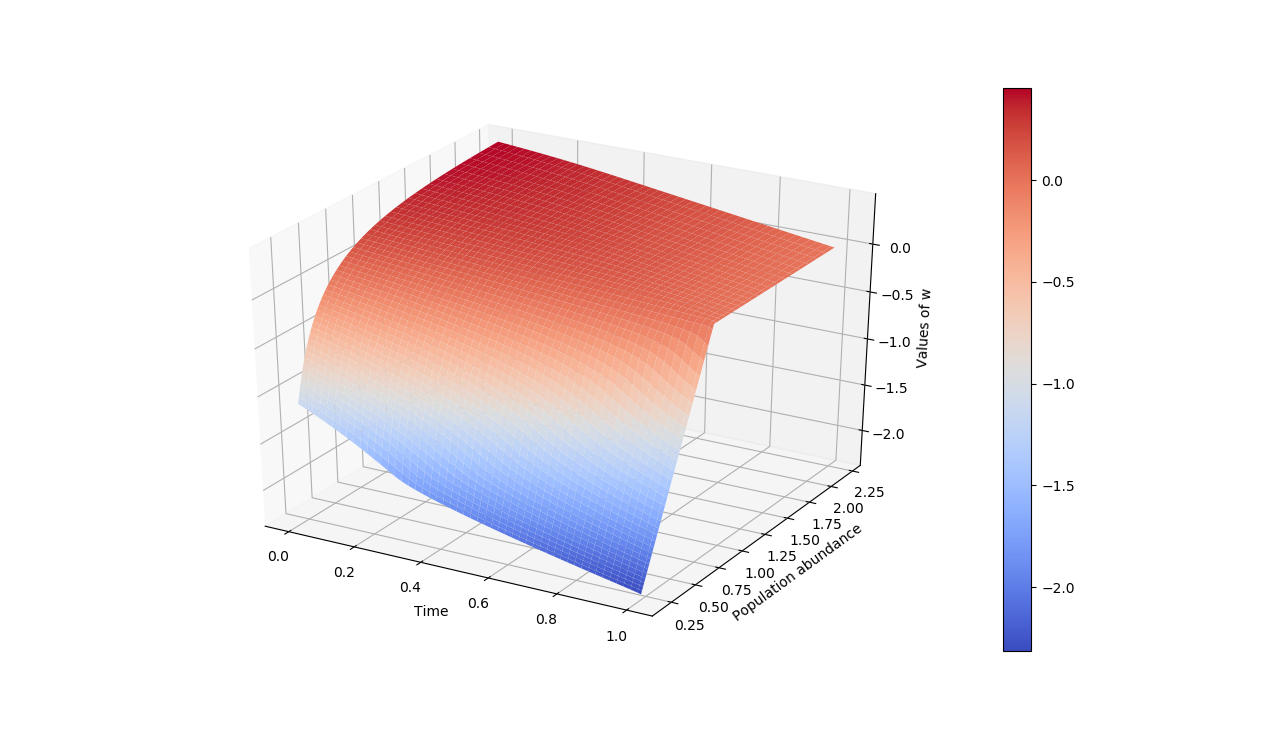}
\caption{\label{Fonction-valeur-3D} The value function $w_\varepsilon$  w.r.t. the time and the population abundance.}

    \end{minipage}
\end{figure}
Figure \ref{couleur 1} shows  the Agent harvests moderately   at the beginning and the rate is increasing w.r.t. the abundance population. On the contrary, for times close to the maturity the strategy depends on the abundance of the resource. Indeed, for an abundance below the target $\beta$, \textit{i.e.} $X^\alpha_t < 0.9$, the Agent renews the population, and for an abundance higher than $\beta$, \textit{i.e.} $X^\alpha_t \geq 0.9$, the Agent harvests. Moreover, the lowest is the abundance, the most the Agent renews the population, and the highest is the abundance, the most the Agent harvests the population.\\
 
Figure \ref{Fonction-valeur-3D} presents the graph of the value function $w_\varepsilon$ of the Principal. The function $w_\eps$ is increasing w.r.t. the population abundance. This property is expected in view of the Principal optimization problem. We also remark that the value function $w_\varepsilon$ is decreasing w.r.t. the time to maturity. Indeed, the longer is the time to maturity, the best it is for the Agent and the Principal since the resource has more time to regenerates itself.\\

\begin{figure}[h]
    \begin{minipage}[c]{.46\linewidth}
\hspace{-1cm}
\includegraphics[width=8.5cm]{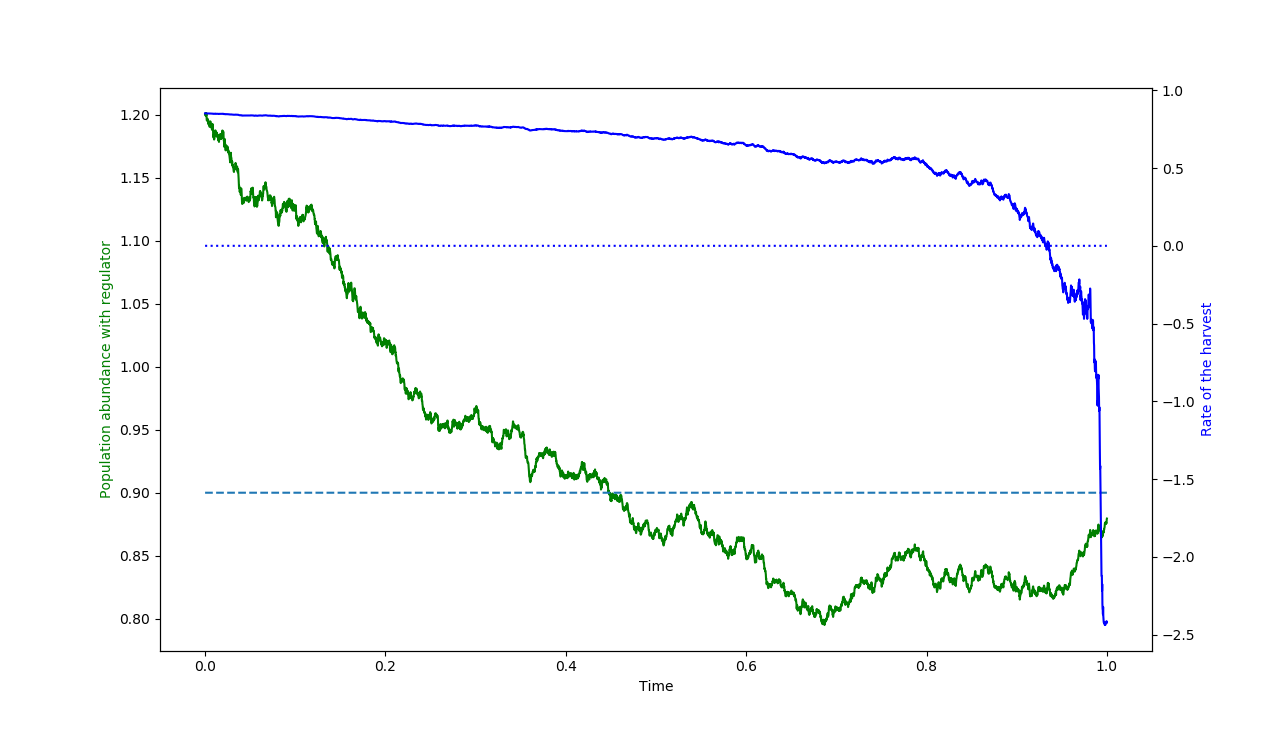}
\caption{\label{trajectoire 1} A trajectory of the optimally controlled population abundance (green curve, $y$-axis on the left) and the associated optimal harvest rate (blue curve, $y$-axis on the right) w.r.t. the time ($x$-axis). The dotted line corresponds to $\alpha=0$, and the dashed line corresponds to $X_t^{\alpha^*}=\beta$. }
    \end{minipage}
    \hfill%
  \begin{minipage}[c]{.46\linewidth}
 \vspace{-1.3cm}  \hspace{-1cm} 
\includegraphics[width=8.5cm]{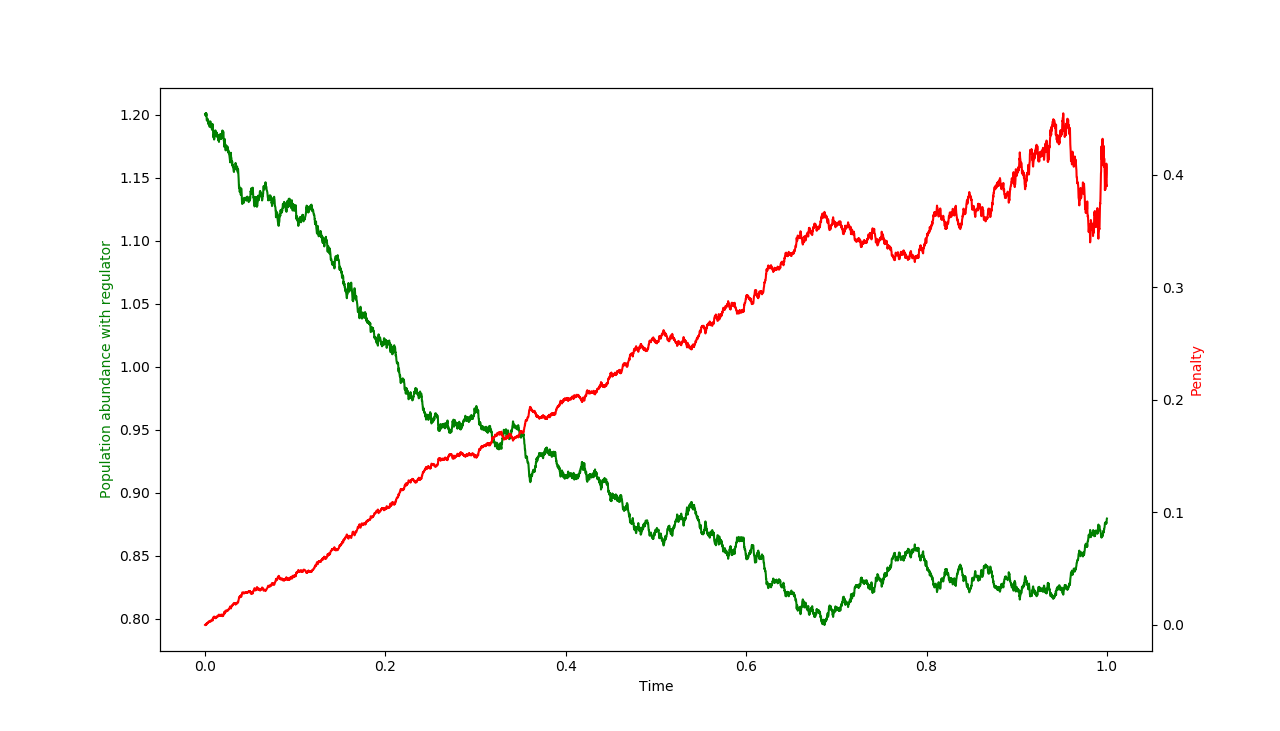}
\caption{\label{penalite1} The evolution of the penalty ($y$-axis on the right, red curve) and the population abundance ($y$-axis on the left, green curve)  w.r.t. the time ($x$-axis).}
    \end{minipage}
\end{figure}

Figure \ref{trajectoire 1} shows that the  Agent harvests with an important rate at the beginning : $\alpha_t$ is around $0.6$. As he get closer to the matirity, the Agent slows down the harvest and then renew the resource. This can be interpreted as follows.

The Agent harvests with a high rate and do note care about the tax at maturity since  the population as  has time to regenerate itself.  

Getting closer to the maturity, the Agent take into acount the tax and slows down the harvest. When very close to maturity, $t\approx0.93$, the Agent renews the population to ensure an abundance  close to the target $\beta = 0.9$ to limit the tax. This shows that the incentive policy is efficient. \\

Figure \ref{penalite1} presents the forecast of the penalty (\textit{i.e.} $Y^{\tilde R, Z^\varepsilon, n}$) in red, and the abundance population in green. 
We notice these two quantities evolve in opposite ways: for high values of the abundance, the expected tax is low, and for low abundance the penalty becomes greater.

\noindent We now study the sensitivity of the incentive policy w.r.t. the target $\beta$ (see Figure \ref{Trajectoire moyenne beta}) and w.r.t. the renewal cost $c$ (see Figure \ref{Trajectoire moyenne c}).
\begin{figure}[h]
    \begin{minipage}[c]{.4\linewidth}
	\hspace{-1cm}\includegraphics[scale=0.28]{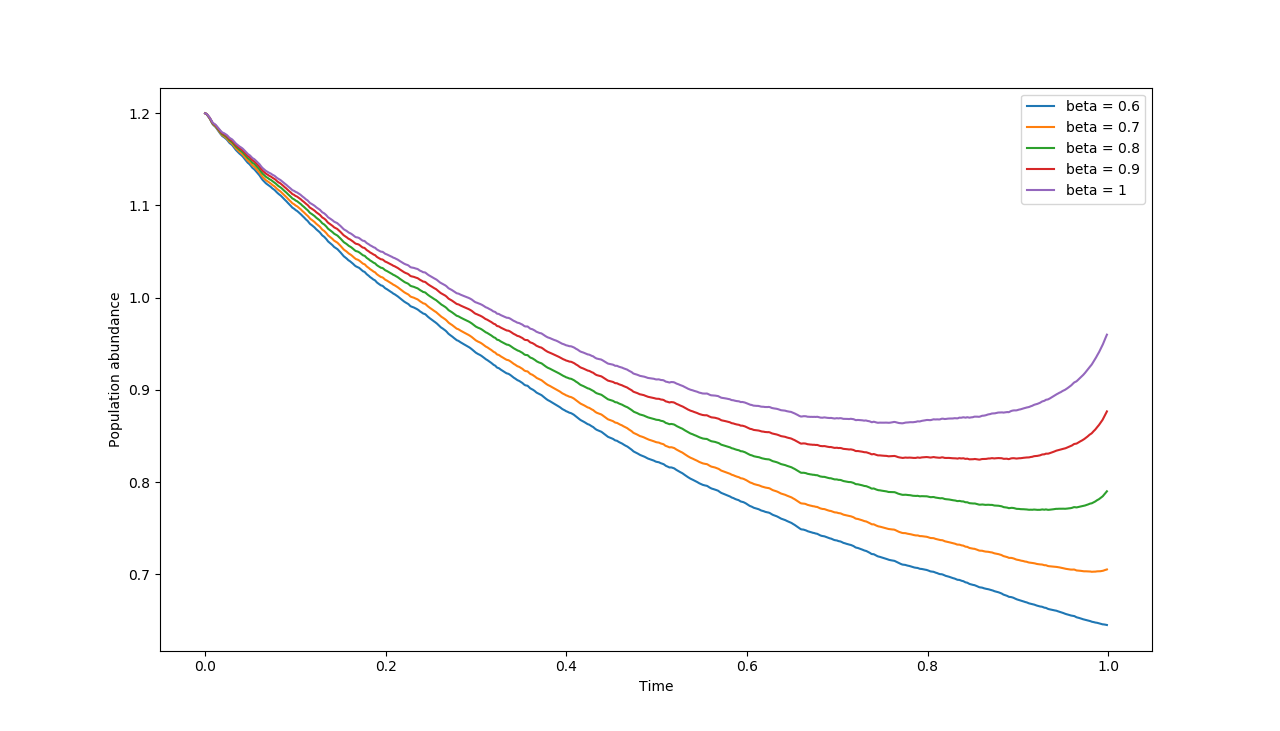}
	\caption{\label{Trajectoire moyenne beta} Evolution in mean of the population abundance w.r.t. the time for different values of $\beta$.}
    \end{minipage}
    \hfill
  \begin{minipage}[c]{.4\linewidth}
	\hspace{-1.5cm}\includegraphics[scale=0.28]{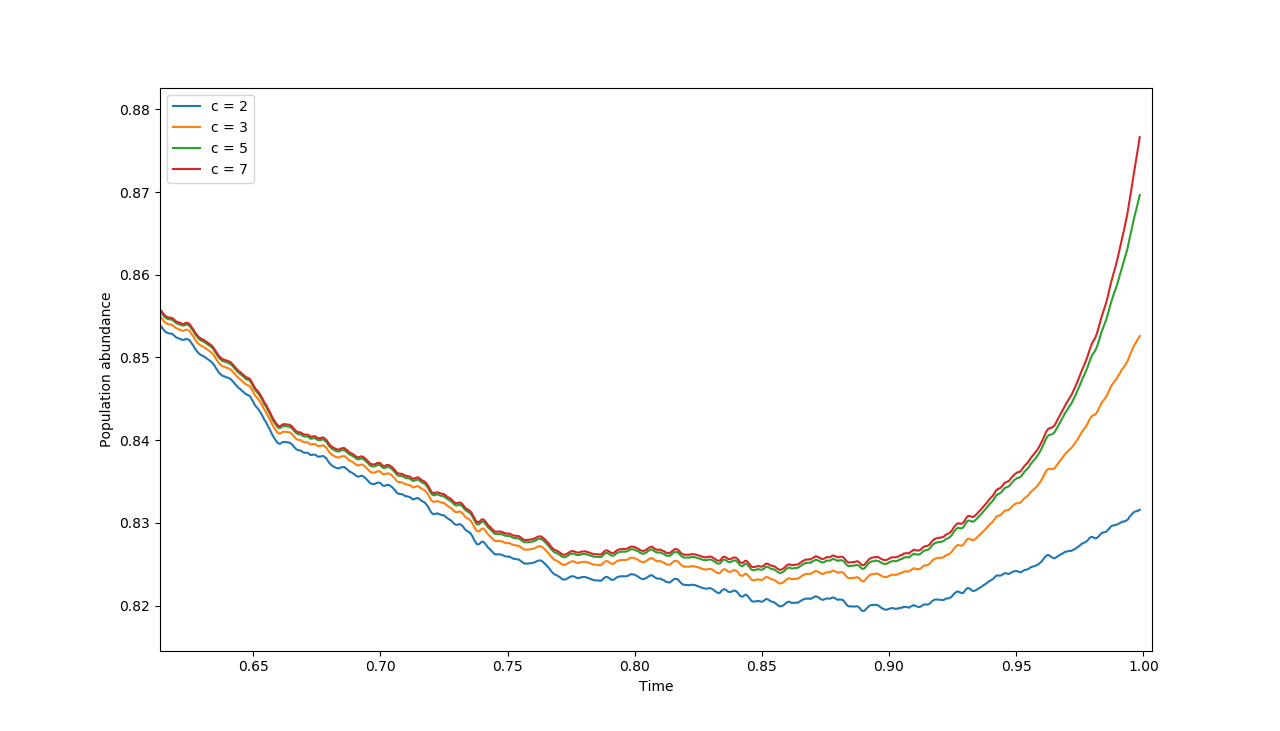}
	\caption{\label{Trajectoire moyenne c}  Evolution in mean of the population abundance w.r.t. the time for different values of $c$.}
    \end{minipage}
\end{figure}

Figure \ref{Trajectoire moyenne beta} presents the evolution in mean of the abundance w.r.t. time for several values of $\beta$. The mean is approximated by the empirical mean over 1000 trajectories. We remark that at each time the mean of the population abundance is more important as $\beta$ is larger. This shows that the choice of $\beta$ influences the behavior of the resource manager: the most is important $\beta$ the least the Agent harvests. We also notice that for each value of $\beta$, the mean terminal value reaches the target, which also shows the incentive effect of the parameter $\beta$.\\ 

 Figure \ref{Trajectoire moyenne c} shows the evolution of the mean of the resource abundance w.r.t. time for several values of the costs parameter $c$. The mean is approximated by the empirical mean over  for 1000 trajectories. We remark that the population abundance is nondecreasing w.r.t. $c$. In particular, the highest the penalty is, the most the Agent is concerned, through the incentive policy, by the size of the population at the end.\\

\noindent We now compare the situation for which the Agent can renew the population abundance (that is $\underline M > 0$) with the situation for which the Agent can only harvest (that is $\underline M=0$).
\begin{figure}[h]
\centering
\includegraphics[width=9cm]{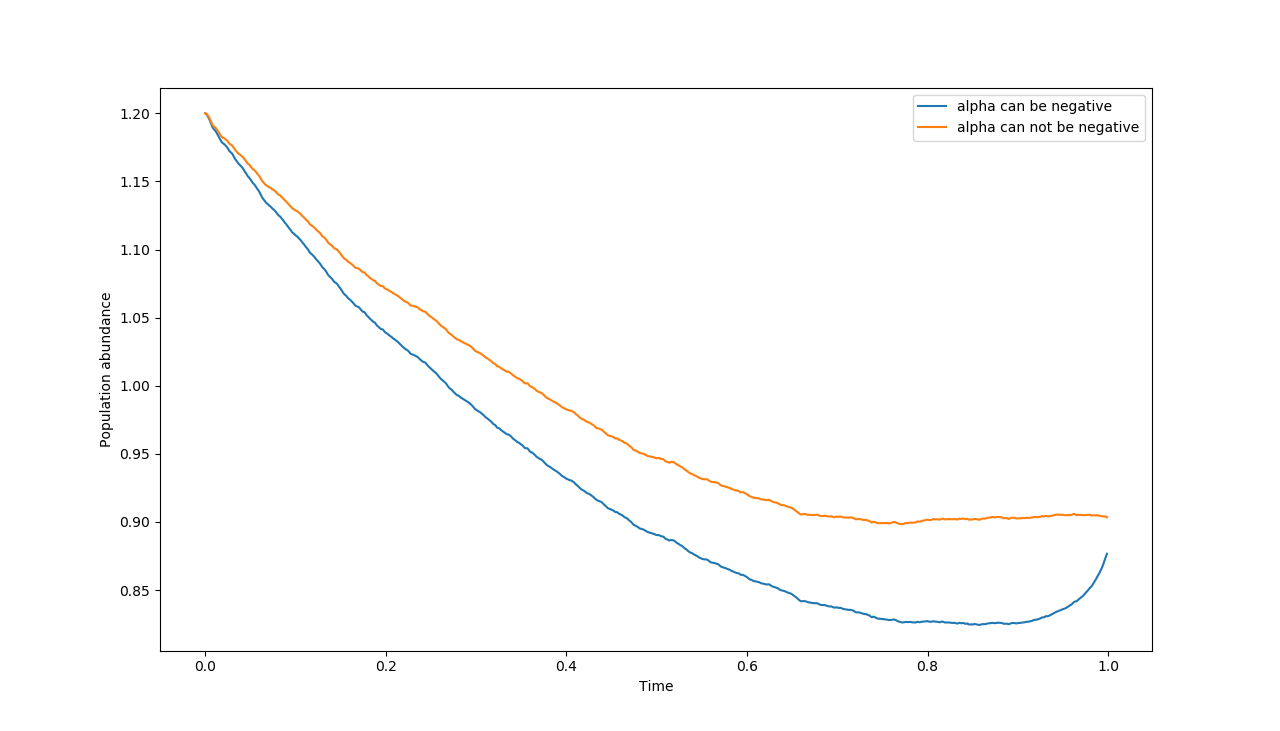}
\caption{\label{Trajectoire moyenne sans renouvellement} Evolution in mean of the population abundance w.r.t. the time when the Agent can (blue curve) and cannot (orange curve) renew the population abundance.}
\end{figure}
 In Figure \ref{Trajectoire moyenne sans renouvellement} the mean is approximated by the empirical mean over 1000 trajectories.  We remark that at each time the population abundance is  more important in mean if the Agent cannot renew the resource. 
 Indeed, if  the resource is not renewable, the Agent reduces his harvesting rate in prevision of the terminal tax. On the contrary, if  the resource can be renewed,  the Agent harvests more to generate a higher profit since he can reduce the terminal tax by renewing  the resource at the end.
\appendix
\section{Proof of Proposition \ref{Approx-Hamiltonian}}
We first recall that $\Hc$ is defined by
\beqs
\mathcal H(y,\delta_1,\delta_2) & = & \sup_{z\in\R}\left\{ e^yp(e^y)a^*(e^y,z)-{a^*(e^y,z)^2\over 2}-\frac{ \sigma^2}{2}\gamma z^2 + (\lambda-{\sigma^2\over 2}-\mu(e^y)-a^*(e^y,z))\delta_1 \right\}\\
 & & 
+\frac{\sigma^2}{2}  \delta_2\;,\quad(y,\delta_1,\delta_2)\in \R\times\R\times\R\;.
\enqs
From the definition of $a^*$ given in \eqref{alphastar:thm} we can rewrite $ \Hc$ by considering  the different cases $e^y p(e^y) + z < - \underline M$, $-\underline M \leq e^y p(e^y) + z \leq \overline M$ and $e^y p(e^y) + z > \overline M$, and making the variable change $z - e^y p(e^y)$ for $z$  under the following form
\beqs
\mathcal H(y,\delta_1,\delta_2) & = & \max\Big\{\mathcal K_1(y,\delta_1) ,\mathcal K_2(y,\delta_1) ,\mathcal K_3(y,\delta_1) \Big\}+(\lambda-\frac{\sigma^2}2-\mu(e^y))\delta_1+\frac{\sigma^2}{2}  \delta_2\;,
\enqs
where
\beqs
\mathcal K_1(y,\delta_1) & = & \sup_{z\leq-\underline M}\left\{\left(\delta_1-e^y p (e^y)-{\underline M\over 2}\right)\underline M-{\sigma^2\over 2}\gamma\left(z-e^y p(e^y)\right)^2\right\}\\
 & = & \left(\delta_1-e^y p (e^y)-{\underline M\over 2}\right)\underline M-{\sigma^2\over 2}\gamma\left(\underline M+e^y p(e^y)\right)^2\;,
\enqs
\beqs
 \mathcal K_2(y,\delta_1) & = & \sup_{z\geq \overline M}\left\{\left(\delta_1-e^y p (e^y)-{\overline M\over 2}\right)\overline M-{\sigma^2\over 2}\gamma\left(z-e^y p(e^y)\right)^2\right\}\\
 & = & \left(-\delta_1+e^y p (e^y)-{\overline M\over 2}\right)\overline M-{\sigma^2\over 2}\gamma\left(\left[\overline M-e^y p(e^y)\right]_+\right)^2\;,
\enqs
and
$$
\mathcal K_3(y,\delta_1)  =  \sup_{-\underline M\leq z\leq \overline  M}\left\{-{1\over 2}(1+\gamma\sigma^2)z^2+\big(e^yp(e^y)(1+\gamma^2\sigma)-\delta_1\big)z\right\}-\frac{ \sigma^2}{2}\gamma |e^yp(e^y)|^2  
$$
for all $y,\delta_1\in\R$. 

A straightforward computation gives $\mathcal K_3(y,\delta_1) = Q(e^y p(e^y), \delta_1)$ where
\beqs
Q(\mathfrak{p},\delta_1) & = & 
{\big(\mathfrak{p}(1+\gamma^2\sigma)-\delta_1\big)^2\over 2(1+\gamma\sigma^2)}\mathds{1}_{\mathfrak{p}-{\delta_1\over 1+\gamma\sigma^2}\in[-\underline M,\overline M]}\\
 & & +\left(-{1\over 2}(1+\gamma\sigma^2)\overline M^2+\big(\mathfrak{p}(1+\gamma^2\sigma)-\delta_1\big)\overline M\right) \mathds{1}_{\mathfrak{p}-{\delta_1\over 1+\gamma\sigma^2}\in(\overline M,+\infty)} \\
& & 
+ \left(-{1\over 2}(1+\gamma\sigma^2)\underline M^2-\big(\mathfrak{p}(1+\gamma^2\sigma)-\delta_1\big)\underline M\right)\mathds{1}_{\mathfrak{p}-{\delta_1\over 1+\gamma\sigma^2}\in(-\infty,-\underline M)}\\
 & & -\frac{ \sigma^2}{2}\gamma |\mathfrak{p}|^2
\enqs
for all $\mathfrak p,\delta_1\in\R$.
\noindent

We then introduce the functions $\mathrm{abs}_\eps:~\R\rightarrow\R_+$ and 
 $\max_\eps:~\R^2\rightarrow\R$ defined by
\beqs
\mathrm{abs}_\eps(x) & = & |x|\Big(\Theta \Big(-\frac{4}{\varepsilon} x - 3 \Big) + \Theta \Big(\frac{4}{\varepsilon} x - 3 \Big) \Big)\;,\quad x\in\R\;,\\
\mathrm{max}_\eps (x,y) & = & {\mathrm{abs}_\eps (x-y)+x+y\over 2}\;,\quad x,y\in\R\;,
\enqs
for any $\eps>0$ where we recall that the function $\Theta$ is defined by (\ref{defTheta}). From the definition of $\Theta$,  the function $\mathrm{max}_\eps$ is infinitely differentiable with bounded derivatives and  we have
\beq\label{estim-maxeps}
\sup_{x,\,y\,\in\,\R}\big|\max(x,y)-\mathrm{max}_{\eps}(x,y)\big| & \leq & {\eps\over 3}
\enq
for all $\eps>0$.

Fix the constant $\Gamma  := \max\left\{ \overline M (1+\gamma\sigma^2), (P+\underline M) (1+\gamma\sigma^2) \right\}$. 
Since the function $Q$ is continuous on $\R^2$, there exists $Q_\eps$ infinitely differentiable on $\R^2$ such that
\beq\label{ineqQQeps}
\sup_{(\mathfrak{p},\delta_1)\in[0,P]\times [- (\Gamma+1), \; \Gamma+1]}|Q(\mathfrak{p},\delta_1)-Q_\eps(\mathfrak{p},\delta_1)| & \leq & {\eps\over 3}\;.
\enq
We then define $\Kc_{3,\eps}$ for any $y,\delta_1\in\R$ by
\beqs
\Kc_{3,\eps}(y,\delta_1) & = & Q_\eps(e^yp(e^y),\delta_1){\Big(1-\Theta(2(\delta_1-\Gamma) - 1)-\Theta\big(-2( \Gamma + \delta_1) -1)\big)\Big)}\\
 & & +\left(-{1\over 2}(1+\gamma\sigma^2)\underline M^2-\big(e^yp(e^y)(1+\gamma^2\sigma)-\delta_1\big)\underline M\right) {\Theta\big(2(\delta_1- \Gamma)-1\big)} \\
  & & +\left(-{1\over 2}(1+\gamma\sigma^2)\overline M^2+\big(e^yp(e^y)(1+\gamma^2\sigma)-\delta_1\big)\overline M\right) {\Theta\big(-2( \Gamma+\delta_1)-1\big)} \;.
\enqs
From \eqref{ineqQQeps} we have
$$
\sup_{\R^2} |\mathcal K_{3,\eps}-\mathcal K_3|  \leq  {\eps\over 3}\;.
$$

We then define for any $y,\delta_1,\delta_2\in \R$ the approximated Hamiltonian $\Hc_\eps$ by
$$
\mathcal H_\eps(y,\delta_1,\delta_2)  :=  \mathrm{max}_\eps\Big\{\mathcal K_1(y,\delta_1) ,\mathcal K_2(y,\delta_1) ,\mathcal K_{3,\eps}(y,\delta_1) \Big\}+\frac{\sigma^2}{2}  \delta_2 +(\lambda-\frac{\sigma^2}2-\mu(e^y))\delta_1 \;.
$$
We therefore get from \eqref{estim-maxeps}
$$
\sup_{\R^3} |\mathcal H_\eps-\mathcal H|  \leq  \eps\;.
$$
We then turn the PDE driven by $\Hc_\eps$ that writes
\begin{equation}\label{pde:w_eps}\begin{cases}
&-\partial_t w_\eps-\mathcal H_\eps\Big(y,\partial_{y} w_\eps(t,y), \partial_{yy} w_\eps(t,y)\Big)= 0\; ,\quad (t,y)\in [0,T)\times \mathbb R \;,\\
&w_\eps(T,y)= -f(e^y)\;,\quad y\in \mathbb R \;.
\end{cases}\end{equation}

\vspace{2mm}

\ni Under Assumption \textbf{(H')}, we can write the approximated Hamiltonian $\Hc_{3,\eps}$ under the form
$$
\Hc_{3,\eps} (y,\delta_1,\delta_2)  =   \frac{\sigma^2}{2}  \delta_2 + b_\eps(u,\delta_1)\delta_1+ c_\eps(y,\delta_1)
$$
where the coefficients $b_\eps$ and $c_\eps$ satisfy Conditions A, B and D of \cite{oleinik1961quasi}. Then, according to Theorem 14 in \cite{oleinik1961quasi}, PDE \eqref{pde:w_eps} admits a unique solution $w_\eps\in C^{2+\nu}([0,T]\times\R)$.

\end{document}